\newcommand{\ncm}{\newcommand}
\newtheorem{theorem}{Theorem}[section]
\newtheorem{prop}[theorem]{Proposition}
\newtheorem{lemma}[theorem]{Lemma}
\newtheorem{cor}[theorem]{Corollary}
\newtheorem{lem&def}[theorem]{Lemma \& Definition}
\newtheorem{definition}[theorem]{Definition}
\newtheorem{example}[theorem]{Example}
\newtheorem{remark}[theorem]{Remark}
\def\M{\mathcal{M}}
\def\C{\mathbb{C}\,} 
\def\Z{\mathbb{Z}\,} 
\def\R{\mathbb{R}\,} 
\def\N{\mathbb{N}\,}
\ncm{\End}{\mbox{\rm End}\,}
\def\Ann{\mbox{\rm Ann}}
\def\|{\, | \,}
\def\id{\mbox{\rm id}}
\def\into{\hookrightarrow}
\ncm{\rarr}[1]{\stackrel{#1}{\longrightarrow}}
\ncm{\larr}[1]{\stackrel{#1}{\longleftarrow}}
\def\cop{\Delta}
\def\eps{\varepsilon}
\def\-2{_{(-2)}}
\def\-1{_{(-1)}}
\def\0{_{(0)}}
\def\1{_{(1)}}
\def\2{_{(2)}}
\def\3{_{(3)}}
\def\4{_{(4)}}
\def\i+1{_{(i+1)}}
\def\n{_{(n)}}
\def\n+1{_{(n+1)}}
\def\du1{\hat 1}
\begin{document}
\title[Subgroup depth and twisted coefficients]{Subgroup depth and twisted coefficients}
\author[A.~Hernandez, L.~Kadison and M.~Szamotulski]{Alberto Hernandez, Lars Kadison and Marcin Szamotulski} 
\address{Departamento de Matematica \\ Faculdade de Ci\^encias da Universidade do Porto \\ 
Rua Campo Alegre 687 \\ 4169-007 Porto} 
\email{ahernandeza079@gmail.com, lkadison@fc.up.pt, mszamot@gmail.com} 
\thanks{}
\subjclass{16S40, 16T05, 18D10, 19A22, 20C05}  
\keywords{entwining structure, Galois coring, coideal subalgebra, subgroup depth, twisted group algebra, core of a subgroup}
\date{} 

\begin{abstract}
Danz computes the depth of certain twisted group algebra extensions in \cite{D}, which are less than the values of the depths of the corresponding untwisted group algebra extensions in \cite{BKK}.  
In this paper, we show that the closely related h-depth of any group crossed product algebra extension is less than or equal to 
the h-depth of the corresponding (finite rank) group algebra extension.  
A convenient theoretical underpinning to do so is provided by the 
entwining structure of a right $H$-comodule algebra $A$ and a right $H$-module coalgebra $C$ for a Hopf algebra $H$. 
Then $A \otimes C$ is an $A$-coring, where corings have a notion of depth extending h-depth. This
coring is Galois in certain cases where 
$C$ is the quotient module $Q$ of a coideal subalgebra $R \subseteq H$.  We note that this applies 
for the group crossed product algebra extension, so that the depth of this Galois coring is less than the h-depth of $H$ in $G$.  Along the way, we show that subgroup depth behaves exactly like combinatorial depth with respect to the core of a subgroup, and extend results in \cite{K2013} to coideal subalgebras of finite dimension.    
\end{abstract} 
\maketitle

\section{Introduction and Preliminaries}
\label{one}
Subgroup depth $d_0(H,G)$ of a subgroup $H$ in a finite group $G$ is introduced in \cite{BKK} as the minimum depth of the induction-restriction table of irreducible characters of $H$ and $G$, a matrix
of nonnegative integers with nonzero rows and columns.  As the matrix of induction $K_0(\C H) \rightarrow K_0(\C G)$, the notion of depth also occurs in fields of topological algebra in various guises.  In ring theory, the minimum depth $d(B,A)$ of a subring $B \subseteq A$ is introduced in \cite{BDK} in terms of the natural bimodule ${}_BA_B$ and its tensor powers (and for even depth, tensored one more time by
the bimodule ${}_BA_A$ or ${}_AA_B$).  The depth $d_0(H,G)$ is recovered in \cite{BDK} by letting
$B$, $A$ be group algebras over a field of characteristic zero; indeed, \cite{BDK} shows that depth $d(kH,kG)$  depends only on the characteristic of the field $k$ \cite{BDK}.  

The more general definition also allows one to consider depth of integral group ring extensions $\Z H \subseteq \Z G$ and their minimum depth $d_{\Z}(H,G)$.
In addition, a combinatorial depth $d_c(H,G)$ is introduced in \cite{BDK} by using G-set 
analogues of balanced tensors and bimodules.  The following string of inequalities is from \cite[4.5]{BDK}:  $$d_0(H,G) \leq d_p(H,G) \leq d_{\Z}(H,G) \leq d_c(H,G) \leq 2|G: N_G(H) |.$$ 

In \cite{LK2011}, h-depth of a subring pair $B \subseteq A$ is introduced by the same process as in the definition of depth but focussing on the natural $A$-$A$-bimodules of the tensor powers of $A$ relative to $B$.   The minimum h-depth $d_h(B,A)$
is closely related to $d(B,A)$ by the inequality $| d(B,A) - d_h(B,A) | \leq 2$.  Its definition  in~\ref{def-depth} suggests the notion of h-depth is  natural and almost unavoidable when considering subgroup depth.  In this paper,  h-depth provides a natural transition (in Section~2) to depth of an $A$-coring, which is a notion of coalgebra generalized to $A$-bimodules
where the comultiplication and counits are $A$-bimodule morphisms.

In \cite{D} the subring depth of twisted group algebras of the permutation groups $\Sigma_n$
are computed as an intriguing contrast to  the untwisted case in \cite{BKK}, where it was shown that $d_0(\Sigma_n, \Sigma_{n+1}) = 2n-1$.
In \cite{D} it was shown that with $\alpha$ the nontrivial $2$-cocycle  (representing the nonzero element in  $H^2(\Sigma_n, \C^{\times})$), the twisted complex group algebra extensions have minimum depth  
\begin{equation}
\label{eq: Danz}
d(\C_{\alpha}\Sigma_n, \C_{\alpha}\Sigma_{n+1}) = 2(n - \lceil \frac{\sqrt{8n+1} - 1}{2} \rceil) + 1.
\end{equation}

Note that $d(\C_{\alpha}\Sigma_n,\C_{\alpha}\Sigma_{n+1}) \leq d_0(\Sigma_n, \Sigma_{n+1})$
with a difference that goes to infinity as $n \rightarrow \infty$. The same is true of the alternating group series $A_n$, where  $d_0(A_n,A_{n+1}) = 2(n - \lceil \sqrt{n}\, \rceil) +1$ $\geq d(C_{\alpha}A_n, \C_{\alpha}A_{n+1})$, the last depth also equal to the right-hand side of Eq.~(\ref{eq: Danz}); cf.\ \cite[appendix]{BKK}. 
Given a subgroup $H \leq G$, we show in this paper that  the crossed product of a twisted $G$-algebra $A$  with subalgebra
$A \#_{\sigma} H$ has  h-depth less than or equal to the h-depth of the corresponding group algebra extension:
i.e., we establish 
$$d_h(A \#_{\sigma} H,A \#_{\sigma} G) \leq d_h(kH, kG)$$
in Eq.~(\ref{eq: hdepthineq}) in Section~4 below.  
We will also extend an equality for h-depth of a Hopf subalgebra in terms of depth of its quotient module \cite{K2013} to the  equalition 
for h-depth of a left coideal subalgebra of a Hopf algebra $H$ in  Corollary~\ref{cor-cue}.
Our method is to define depth of corings so that the depth of the Sweedler coring of a ring extension is its h-depth, and apply (Doi-Koppinen) entwining structures that are Galois corings.  

 In Corollary~\ref{cor-core} below we show that $d_h(H,G) = d_h(G/N,H/N)$ where $N$ is the core of a subgroup $H$
in a finite group $G$ (over any ground field).  In Corollary~\ref{cor-cores} we note that subgroup depth  behaves precisely like combinatorial depth with repect to this subgroup $N$, after proving 
that subring depth is preserved by quotienting of relatively nice ideals (Theorem~\ref{prop-sigma}), or
better yet, by 
relatively nice Hopf ideals (Proposition~\ref{prop-crop}).  A final application is to a left coideal subalgebra $R$ of a finite-dimensional Hopf algebra $H$, which is left normal iff a nonzero right integral in $R$ is a normal element in $H$ (Theorem~\ref{th-leftrightleft}). 

\subsection{Preliminaries on subalgebra depth} Let $A$ be a unital associative algebra over a field $k$. The category of  modules over $A$ will be denoted
by $\M_A$.  (For finite-dimensional $A$, the notation $\M_A$ denotes the category of finite-dimensional modules.)
Two modules $M_A$ and $N_A$ are \textit{similar} (or H-equivalent) if $M \oplus * \cong q \cdot N := N \oplus \cdots \oplus N$ ($q$ times) and $N \oplus * \cong r \cdot M$
for some $r,q \in \N$.  This is briefly denoted by $M \| q \cdot N$ and $N \| r \cdot M$ for some $q,r \in \N$  $\Leftrightarrow$ $M \sim N$. Recall that similar modules have Morita equivalent endomorphism rings.  

 Let $B$ be a subalgebra of $A$ (always supposing $1_B = 1_A$).  Consider the natural bimodules ${}_AA_A$, ${}_BA_A$, ${}_AA_B$ and
${}_BA_B$ where the last is a restriction of the preceding, and so forth.  Denote the tensor powers
of ${}_BA_B$ by $A^{\otimes_B n} = A \otimes_B \cdots \otimes_B A$ ($n$ times $A$) for $n = 1,2,\ldots$, which is also a natural bimodule over  $B$ and $A$ in any one of four ways;     set $A^{\otimes_B 0} = B$ which is only a natural $B$-$B$-bimodule.

\begin{definition}
\label{def-depth}
Suppose that $A^{\otimes_B (n+1)}$ is similar to $A^{\otimes_B n}$ as natural $X$-$Y$-bimodules
for subrings $X$ and $Y$ of $A$ and $n \in \N$.  One says that subalgebra $B \subseteq A$ has  
\begin{itemize}
\item \textbf{depth} $2n+1$ if $X = B = Y$ for $n \geq 0$;
\item \textbf{left depth} $2n$ if $X = B$ and $Y = A$,
\item \textbf{right depth} $2n$ if $X = A$ and $Y = B$,
\item \textbf{h-depth} $2n-1$ if $X = A = Y$,
\end{itemize}
 for $n \geq 1$. 

Note that if $B \subseteq A$ has h-depth $2n-1$, the subalgebra has (left or right) depth $2n$ by restriction of modules.  Similarly, if $B \subseteq A$ has depth $2n$, it has depth $2n+1$.  If $B \subseteq A$ has depth $2n+1$, it has depth $2n+2$ by tensoring either $-\otimes_B A$ or $A \otimes_B -$ to $A^{\otimes_B (n+1)} \sim A^{\otimes_B n}$.     Similarly, if $B \subseteq A$ has left or right depth $2n$, it has h-depth $2n+1$.  Denote the \textbf{minimum depth}
of $B \subseteq A$  by $d(B,A)$ \cite{BDK}.  Denote the \textbf{minimum h-depth} of $B \subseteq A$ by $d_h(B,A)$ \cite{LK2011}.  Note that
$d(B,A) < \infty$ $\Leftrightarrow$ $d_h(B,A) < \infty$; if so, \cite{LK2011} shows that
\begin{equation}
-2 \leq  d(B,A) - d_h(B,A) \leq 1.
\end{equation}
\end{definition}

 For example, $B \subseteq A$ has depth $1$ iff ${}_BA_B$ and ${}_BB_B$ are similar \cite{BK2, LK2012}.  In this case, one deduces the following algebra isomorphism, 
\begin{equation}
\label{eq: d1eq}
A \cong B \otimes_{Z(B)} A^B,
\end{equation}
 where
$Z(B), A^B$ denote the center of $B$ and centralizer of $B$ in $A$.

Another example is that $B \subset A$ has right depth $2$ iff ${}_AA_B$
and ${}_A A \otimes_B A_B$ are similar.  If $A = \C G$ is a group algebra of a  finite group $G$ and $B = \C H$ is a group algebra of a subgroup $H$ of $G$, then $B \subseteq A$ has right depth $2$ iff $H$ is a normal subgroup of $G$ iff $B \subseteq A$ has left depth $2$; a similar statement is true for a Hopf subalgebra $R \subseteq H$ of finite index and over any field \cite{BK}. 

 Now let $H$ be a Hopf algebra with counit $\eps: H \rightarrow k$, antipode $S: H \rightarrow H$ and
coproduct $\cop: H \rightarrow H \otimes H, \cop(h) = h\1 \otimes h\2$ (Sweedler notation suppressing summations).  Let $A$ a \textit{right $H$-comodule algebra}, i.e., an $H$-comodule
with coaction $\rho: A \rightarrow A \otimes H, \rho(a) = a\0 \otimes a\1$ that is a unital algebra homomorphism.  The
coinvariants $\{ b \in A \, | \, \rho(b)  = b \otimes 1_H \}$ form a subalgebra
$B$; one says $A$ is an \textit{$H$-Galois extension} of $B$ if the Galois mapping $A \otimes_B A \rightarrow A \otimes H$ given by $x \otimes y \mapsto  x y\0 \otimes y\1$ is bijective.  Note that
the Galois mapping is an isomorphism of natural $A$-$B$-bimodules:  if $\dim H = n$,
then $A \otimes_B A \cong n \cdot {}_AA_B$, and $A \supseteq B$ has depth $2$.  
There is the following remarkable converse growing out of Ocneanu's  ideas in subfactor theory with detailed papers by Szymanski, Longo, Nikshych-Vainerman and others:

  \begin{theorem}
Let $A \supseteq B$ be a Frobenius algebra extension with $1$-dimensional centralizer and surjective Frobenius homomorphism.  If $d(B,A) \leq 2$,
then $A$ is a Hopf-Galois extension of $B$.  
\end{theorem}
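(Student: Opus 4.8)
The plan is to manufacture the Hopf algebra $H$ from the $B$-central part of the Sweedler coring $A \otimes_B A$ and to recognize the coring comultiplication as the Galois map of an $H$-comodule algebra structure on $A$. First I would dispose of the degenerate possibility: if $d(B,A)=1$, then ${}_BA_B \sim {}_BB_B$ and Eq.~(\ref{eq: d1eq}) yields $A \cong B \otimes_{Z(B)} A^B$; since the centralizer $A^B = k\cdot 1_A$ is one-dimensional we have $Z(B) = B \cap A^B = k$, so this collapses to $A = B$, which is Hopf-Galois for $H = k$. Hence I may assume $d(B,A) = 2$, and, replacing $A$ by its opposite algebra if necessary, that $B \subseteq A$ has right depth two, i.e. ${}_AA\otimes_B A_B \sim {}_AA_B$.

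Next I would build the bialgebra. Put $H := (A \otimes_B A)^B$, the centralizer of $B$ inside the Sweedler $A$-coring. The right-depth-two similarity realizes ${}_AA\otimes_B A_B$ as a direct summand of a finite sum of copies of ${}_AA_B$, so $H$ is finite-dimensional over $k$, and the comultiplication and counit of the $A$-coring restrict to $H$. The decisive structural fact (the Kadison--Szlach\'anyi depth-two structure theory) is that the base ring of the resulting bialgebroid is exactly the centralizer $A^B$; because $A^B = k$, the bialgebroid is an honest $k$-bialgebra. Concretely, the counit $\eps_H(\sum_i a_i \otimes b_i) = \sum_i a_i b_i$ takes values in $A^B = k$, while the coring comultiplication descends from $A \otimes_B A \otimes_B A$ to $H \otimes_k H$ precisely because the scalar base trivializes the middle $A$-balancing.

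I would then promote $H$ to a Hopf algebra using the Frobenius data. Let $E : A \to B$ be the surjective Frobenius homomorphism with dual bases $\{x_i\}, \{y_i\}$. The Frobenius pairing is nondegenerate and identifies $H$ with the $k$-dual of the companion bialgebra $\End({}_BA_B)$; the antipode carried by the Hopf algebroid of a Frobenius extension specializes over the trivial base $R = k$ to a genuine $k$-linear antipode on $H$, automatically bijective in finite dimension. Here surjectivity of $E$ is what splits the counit and forces the coinvariants to be computed correctly. Running the dual construction, $A$ becomes a right $H$-comodule algebra with coaction $\rho: A \to A \otimes H$, and the combination of surjective $E$ with $A^B = k$ identifies the coinvariant subalgebra $A^{\mathrm{co}\,H}$ with $B$.

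It remains to check the Galois condition, and this is where I expect the real work. The Galois map $\beta : A \otimes_B A \to A \otimes H$, $x \otimes y \mapsto x y\0 \otimes y\1$, is nothing but the coring comultiplication paired against $H$; the right-depth-two similarity supplies the numerical match ${}_AA\otimes_B A_B \sim (\dim_k H)\cdot {}_AA_B$ together with surjectivity of $\beta$. The main obstacle is upgrading this similarity to an honest isomorphism, since depth two by itself delivers only an H-equivalence whereas Hopf-Galois demands bijectivity of $\beta$. Closing this gap is exactly where the two hypotheses must be used in tandem: the surjective Frobenius homomorphism makes $A$ a faithfully flat $B$-module, so that faithfully flat descent promotes the summand relation to the required isomorphism, while the one-dimensional centralizer is what rigidifies the ambient bialgebroid into a Hopf algebra with a well-defined antipode. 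With $\beta$ bijective, $A \supseteq B$ is $H$-Galois, as claimed.
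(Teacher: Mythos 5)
Your overall architecture coincides with the paper's intended proof, which is essentially a pointer to \cite{LK2004} and \cite{KS}: dispose of depth one via Eq.~(\ref{eq: d1eq}) (forcing $A=B$, the trivial Hopf algebra), pass to right depth two, form the finite-dimensional $H = (A \otimes_B A)^B \cong \End ({}_AA\otimes_BA_A)$ dual to $\End {}_BA_B$, use the one-dimensional centralizer to collapse the Kadison--Szlach\'anyi bialgebroid over $A^B$ to an honest $k$-bialgebra, use the Frobenius structure for the antipode, and use surjectivity of the Frobenius homomorphism (equivalently, $A_B$ a generator) to pin down the coinvariants. All of these role assignments agree with the remarks the paper makes around the theorem statement.

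The genuine gap is in your last paragraph, the step you yourself flag as ``the real work.'' Two claims there are not valid as stated. First, a similarity ${}_AA\otimes_BA_B \sim {}_AA_B$ does not by itself ``supply surjectivity of $\beta$'': nothing about an H-equivalence forces one particular map to be onto. Second, there is no principle by which faithful flatness promotes a summand relation to an isomorphism --- $N$ and $N\oplus N$ are similar over any ring, faithfully flat base or not, and are generally not isomorphic. What actually closes the gap in \cite{KS, LK2004} is that the \emph{splitting maps} of ${}_AA\otimes_BA_B \| q\cdot {}_AA_B$ produce a right D2 quasibasis: finitely many $c_i = c_i^1 \otimes_B c_i^2 \in (A\otimes_BA)^B$ and $\gamma_i \in \End{}_BA_B$ with $x \otimes_B y = \sum_i x\gamma_i(y)c_i^1 \otimes_B c_i^2$ for all $x,y\in A$. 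Defining the coaction by $\rho(y) = \sum_i \gamma_i(y)\otimes c_i$, this identity says precisely that $\beta$ has the explicit two-sided inverse $a \otimes h \mapsto ah^1 \otimes_B h^2$; so depth two delivers bijectivity of the Galois map on the nose, with no descent argument. Faithful flatness (the generator/balanced property of $A_B$ coming from surjectivity of the Frobenius homomorphism) is needed for the different task you had in fact already assigned to it correctly: showing the coinvariants are exactly $B$ and not larger. Your proposal is repaired by replacing the descent step with this quasibasis computation; alternatively, once surjectivity of $\beta$ is honestly established, finite dimensionality of $H$ would give injectivity by Kreimer--Takeuchi, but surjectivity itself already requires the quasibasis.
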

 
The notion of Frobenius extension is defined in Example~\ref{example-Frobenius}; a short proof-with-references in \cite{LK2004}.  The surjectivity condition ensures
that $A_B$ is a generator (and conversely \cite{LK2012}).    This theorem requires particularly the use of the depth two condition for the construction of a Hopf algebra
structure on $\End {}_BA_B$ or its dual Hopf algebra structure on $\End {}_AA \otimes_B A_A \cong (A \otimes_B A)^B$ \cite{KS}.  (However,  if $d(B,A) = 1$, none of these difficulties arise, as Eq.~(\ref{eq: d1eq}) forces $A = B$ with one-dimensional center, the trivial Hopf algebra.)  The stringent condition on the centralizer $A^B$ may be relaxed if
one considers more general Hopf algebras and their Galois coactions (such as weak Hopf algebras and
Hopf algebroids) \cite{KS}.

Note that one always has 
\begin{equation}
\label{eq: divides}
A^{\otimes_B n} \| A^{\otimes_B (n+1)}
\end{equation} as natural $A$-bimodules for all $n \geq 2$: one applies the unit mapping and multiplication to obtain a split monic (or split epi). For  $n =1$, though it holds for the other three of the bimodule structures, it
is not generally true  as $A$-$A$-bimodules,  $A \| A \otimes_B A$ being the separable extension condition on $B \subseteq A$.  
Now $A \otimes_B A \| q \cdot A$ as $A$-$A$-bimodules for some $q \in \N$
is the H-separability condition and implies $A$ is a separable extension of $B$ by Hirata, cf.\ \cite[2.6]{NEFE}.  Somewhat similarly, ${}_BA_B \| q  \cdot {}_BB_B$ implies
${}_BB_B \| {}_BA_B$ \cite{LK2012}. It follows that subalgebra depth and h-depth may be equivalently defined by replacing the similarity bimodule conditions for depth and h-depth in Definition~\ref{def-depth} with the corresponding bimodules on 
\begin{equation}
\label{eq: def}
A^{\otimes_B (n+1)} \| q \cdot A^{\otimes_B n}
\end{equation}
 for some positive integer $q$ \cite{BDK, LK2011, LK2012}.  

For example, for the permutation groups $\Sigma_n < \Sigma_{n+1}$
and their corresponding group algebras over any
commutative ring $K$, one has depth $d_K(\Sigma_n,\Sigma_{n+1}) = 2n-1$ \cite{BDK}. Depths of subgroups in $PGL(2,q)$, Suzuki groups, twisted group algebra extensions and Young subgroups of $\Sigma_n$ are computed in \cite{F, HHP, D, FKR}.    If $B$ and $A$ are semisimple complex algebras, the minimum odd depth is computed from powers of an order $r$ symmetric matrix with nonnegative entries $\mathcal{S} := MM^T$ where $M$ is the inclusion matrix
$K_0(B) \rightarrow K_0(A)$ and $r$ is the number of irreducible representations of $B$ in a basic set of $K_0(B)$; the depth is $2n+1$ if $\mathcal{S}^n$ and $\mathcal{S}^{n+1}$ have an equal number of zero entries \cite{BKK}. It follows that the subalgebra pair of semisimple complex algebras $B \subseteq A$   always has finite depth.

Similarly, the minimum h-depth of $B\subseteq A$ is computed from
powers of an order $s$ symmetric matrix $\mathcal{T} = M^TM$, where $s$ is the rank of $K_0(A)$; the h-depth is $2n+1$ if $\mathcal{T}^n$ and $\mathcal{T}^{n+1}$ have an equal number of zero entries (equivalently, letting $\mathcal{T}^0 = I_{s \times s}$, one has 
\begin{equation}
\label{matrixinequality}
\mathcal{T}^{n+1} \leq q\mathcal{T}^n
\end{equation}
 for some $q \in \N$) \cite[Section 3]{LK2012}.

\subsection{Depth of Hopf subalgebras, coideal subalgebras and modules in a tensor category $\mathcal{M}_H$}  Let $ H$ be a l Hopf algebra over an arbitrary field $k$. Let $R \subseteq H$ be a Hopf subalgebra, so that $\cop(R) \subseteq R \otimes R$ and the antipode satisfies $S(R) = R$.   It was shown in \cite[Prop.\ 3.6]{K2013} that the tensor powers  of $H$ over $R$, denoted by $H^{\otimes_R n}$,  reduce to tensor powers of the generalized quotient $Q := H/ R^+H$ as follows: 
\begin{equation}
\label{eq: diagaction}
H^{\otimes_R n} \stackrel{\cong}{\longrightarrow} H_{\cdot} \otimes Q_{\cdot}^{\otimes (n-1)}
\end{equation} which
for $n = 2$ is given by $x \otimes_R y \mapsto xy\1 \otimes \overline{y\2}$; see \cite[Eq. (21)]{HKY} for the
straightforward extension of this to all $n$. 
 The isomorphism  is an $H$-$H$-bimodule isomorphism where the left $H$-module structures are the natural endpoint actions (as well as the right action to the left), and the right $H$-module structure on $H \otimes Q \otimes \cdots \otimes Q$ is given by the diagonal action of $H$: 
$$h'(y \otimes q_1 \otimes \cdots \otimes q_{n-1}) \cdot h = h'yh\1 \otimes q_1 h\2 \otimes \cdots \otimes q_{n-1} h_{(n)}.$$

The following proposition directly makes use of the isomorphism for each $n \geq 2$.  Given a subalgebra pair $U \supseteq T$, observe that the bimodule
${}_UU_T$ is projective iff the multiplication mapping $U \otimes T \rightarrow T$, $u \otimes t \mapsto ut$, is a split
epi of $U$-$T$-bimodules iff there is an element $e = e^1 \otimes e^2 \in U \otimes T$ such
that $e^1 e^2 = 1_U$ and $te = et$ for each $t \in T$ (a so-called ``right relative separable tower'' of algebras
$U \supseteq T \supseteq k1_U$).  

\begin{prop}
Suppose $H$ is a finite-dimensional Hopf algebra with $R$ and $Q$ as above, with intermediate
Hopf subalgebras $H \supseteq U \supseteq T \supseteq R$. 
If ${}_UU_T$ is a projective bimodule, then $d(R,H) < \infty$. In particular, the depth is finite if either $H$ or $R$ is semisimple \cite{LK2012, K2013}.   
\end{prop}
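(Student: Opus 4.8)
The plan is to prove finite depth by showing that every tensor power $H^{\otimes_R n}$ is projective as an $R$-$R$-bimodule, i.e.\ as a module over the finite-dimensional algebra $R \otimes R^{\mathrm{op}}$, and then running a counting argument. Since $R \otimes R^{\mathrm{op}}$ admits only finitely many isomorphism classes of indecomposable projective modules, once all the $H^{\otimes_R n}$ are known to be projective, their indecomposable constituents are drawn from this finite list. By Eq.~(\ref{eq: divides}) (restricted from $H$-bimodules to $R$-$R$-bimodules) the set of indecomposable constituents of $H^{\otimes_R n}$ is non-decreasing in $n$, so it must stabilize at some $n_0$; at that stage $H^{\otimes_R(n_0+1)}$ and $H^{\otimes_R n_0}$ are projective $R$-$R$-bimodules with identical constituents, whence they are similar and $d(R,H) \le 2n_0+1 < \infty$ by Definition~\ref{def-depth} and Eq.~(\ref{eq: def}). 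The isomorphism of Eq.~(\ref{eq: diagaction}), $H^{\otimes_R n} \cong H \otimes Q^{\otimes(n-1)}$, is what makes this reduction transparent, since it exhibits the tensor powers as generalized permutation modules governed by the single object $Q$.

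The heart of the matter is to propagate the single projectivity hypothesis along the chain $R \subseteq T \subseteq U \subseteq H$. Starting from the relative separability element $e = e^1 \otimes e^2 \in U \otimes T$ supplied by the projectivity of ${}_UU_T$ (the criterion recalled just before the statement), I would first \emph{induce up}: applying $H \otimes_U -$ to the split epimorphism of $U$-$T$-bimodules $U \otimes T \to U$, $u \otimes t \mapsto ut$, identifies ${}_HH_T \cong H \otimes_U {}_UU_T$ as a direct summand of the free $H$-$T$-bimodule $H \otimes T$, so ${}_HH_T$ is projective. I would then \emph{restrict down}: since $T$ is free as a right $R$-module by the Nichols--Zoeller freeness theorem, the free $H$-$T$-bimodule $H \otimes T$ becomes a free $H$-$R$-bimodule upon restricting the right action to $R$, and hence ${}_HH_R$ is projective as an $H$-$R$-bimodule. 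Both passages use only additivity of induction and restriction together with freeness of $H$ over $U$ and of $T$ over $R$, which is exactly where Nichols--Zoeller enters and where the full chain is genuinely needed.

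With ${}_HH_R$ projective in hand, the main work is the bootstrap to all tensor powers, which I expect to be the principal obstacle. Arguing by induction on $n$: assuming $M := H^{\otimes_R(n-1)}$ is projective as an $R$-$R$-bimodule, I would apply $- \otimes_R M$ to the bimodule splitting of ${}_HH_R$ to exhibit $H^{\otimes_R n} = H \otimes_R M$ as a direct summand of $(H \otimes R) \otimes_R M \cong H \otimes M$. The delicate point is the bookkeeping of the two commuting $R$-actions: after tensoring, the left $R \subseteq H$ acts only on the $H$-factor and the right $R$ only on $M$, so the result is an external tensor product of the free left module ${}_RH$ (Nichols--Zoeller again) with the right-projective module $M_R$, which is projective over $R \otimes R^{\mathrm{op}}$; the summand $H^{\otimes_R n}$ is therefore projective as well. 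The base case $n=1$ is ${}_RH_R$ projective, obtained from ${}_HH_R$ by restricting the left action to $R$.

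Finally, the two special cases follow by choosing the chain appropriately. If $H$ is semisimple I would take $U = T = H$, and if $R$ is semisimple I would take $U = T = R$; in either case the required bimodule ${}_UU_U$ is projective precisely because a finite-dimensional semisimple Hopf algebra is a separable $k$-algebra, which is equivalent to the existence of a separability element $e$ with $e^1 e^2 = 1$ centralizing every element. The general statement then applies verbatim, yielding $d(R,H) < \infty$.
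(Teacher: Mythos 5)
Your proof is correct, and its outer skeleton coincides with the paper's: establish that every tensor power $H^{\otimes_R n}$ is projective as an $R\otimes R^{\rm op}$-module, then combine Eq.~(\ref{eq: divides}), Krull--Schmidt and the finiteness of the set of projective indecomposable isoclasses to force $H^{\otimes_R n_0}\sim H^{\otimes_R (n_0+1)}$ for some $n_0$, and settle the semisimple cases by taking $U=T=H$ or $U=T=R$ and invoking separability of semisimple Hopf algebras. Where you genuinely differ is in how the projectivity is propagated. The paper stays inside the finite tensor category $\mathcal{M}_{U\otimes T^{\rm op}}$: it reads the isomorphism~(\ref{eq: diagaction}) as exhibiting $H^{\otimes_R n}$ as the categorical tensor product of ${}_UH_T$ with $(n-1)$ copies of the left-trivial module $Q$, invokes \cite[Prop.\ 2.1]{EO} (projective tensor anything is projective), and then restricts along the free extension $R\otimes R^{\rm op}\subseteq U\otimes T^{\rm op}$ furnished by Nichols--Zoeller. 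You avoid both \cite{EO} and Eq.~(\ref{eq: diagaction}): you transport the splitting of the multiplication map up the tower by applying $H\otimes_U-$ (incidentally, this step needs no freeness of $H$ over $U$---any additive functor preserves split epis), push it down to ${}_HH_R$ via freeness of $T_R$, and then bootstrap by induction on $n$, realizing $H^{\otimes_R n}=H\otimes_R M$ as a summand of the external tensor product ${}_RH\boxtimes M_R$ of a free and a projective module. Your route is more elementary and self-contained (no tensor-category input, no concern about the ground field), and it makes fully explicit a step the paper compresses into the words ``by restriction'': the appeal to \cite{EO} requires ${}_UH_T$, not merely ${}_UU_T$, to be projective, and extracting this from the hypothesis needs precisely your induce-up-and-restrict argument (or a direct computation with the relative separability element $e$). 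What the paper's packaging buys is brevity given the cited machinery, the explicit bound $d(R,H)\le 2N+3$ in terms of the number $N$ of projective indecomposables, and a formulation at the level of $\mathcal{M}_{U\otimes T^{\rm op}}$ that keeps the module $Q$ in the foreground, consistent with the role of module depth throughout the paper.
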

\begin{proof}
In a finite tensor category, such as $\mathcal{M}_H$ with  the diagonal action, $P \otimes_k X$ is projective if  $P$ is projective and $X$ is any $H$-module \cite[Prop.\ 2.1]{EO} (its proof does not need $k$ to be algebraically closed). 
The proof below will also require the notion of  tensor product Hopf algebra $H \otimes K$ of two Hopf algebras $H$, $K$, as well as the Hopf opposite algebra (with antipode $S^{-1}$) \cite{Ma}.    Then Eq.~(\ref{eq: diagaction}) shows (by restriction) that
each $H^{\otimes_R n}$ is a projective $U$-$T$-bimodule, equivalently projective $H \otimes T^{\rm op}$-module, for one  extends $Q$ to an  (left-sided trivial) $U$-$T$-bimodule via $uqt = \eps(u)qt$.  Since $R^e = R \otimes R^{\rm op}$ is a Hopf subalgebra in the finite-dimensional $U \otimes T^{\rm op}$, and therefore a free extension, it follows that each $H^{\otimes_R n}$ is projective
as a natural $R$-$R$-bimodule. 
 The Krull-Schmidt Theorem, Eq.~(\ref{eq: divides}) and the fact that there are finitely many projective indecomposable isoclasses entail that $H^{\otimes_R (N+1)} \sim H^{\otimes_R (N+2)}$ for $N$ equal to this number (or the number of nonisomorphic simple $H$-$R$-bimodules).  Thus, $d(R,H) \leq 2N + 3$. 

A semisimple Hopf algebra $H$ (or $R$) is a  separable algebra, separability   being characterized by the condition ${}_{H^e}H$ (or ${}_RR_R$)  is 
projective.  The finite depth follows by letting $U = T = H$ (or $U = T = R$).    
\end{proof}

 The bimodule isomorphism in Eq.~(\ref{eq: diagaction}) shows quite clearly
that the following definition applied to $Q$ will be of interest to computing $d(R,H)$.  Let $W$ be a right $H$-module and
$T_n(W) :=
 W \oplus W^{\otimes 2} \oplus \cdots \oplus W^{\otimes n}$.  

\begin{definition}
A module $W$ over a Hopf algebra $H$ has \textbf{depth} $n$ if $T_{n+1}(W) \| t \cdot T_n(W) $
for some positive integer $t$, 
and depth $0$ if $W$ is isomorphic to a direct sum of copies of $k_{\eps}$, where $\eps$ is the counit.
Note that this entails that $W$ also has depth $n+1$, $n+2$, $\ldots$.  Let $d(W, \M_H)$ denote
its \textbf{minimum depth}.  If $W$ has a finite depth, it is said to be an \textit{algebraic module}.  
If $W$ is an $H$-module coalgebra, or $H$-module algebra, the condition of depth $n$ simplifies
to $W^{\otimes (n+1)} \| t \cdot W^{\otimes n}$ for some $t \in \Z_+$ and all $n \in \N$,  where
$W^{\otimes 0}$ denotes $k_{\eps}$.  
  \end{definition}

\begin{lemma}
\label{lemma-modI}
Let  $I$ be a Hopf ideal in a Hopf algebra $H$.  Suppose $I$ is contained in the annihilator ideal of an $H$-module $W$.
Then depth of $W$ is the same over $H$ or $H/I$.
\end{lemma}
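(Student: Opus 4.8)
The plan is to exhibit $W$ and all of its tensor powers simultaneously as objects of $\M_H$ and of $\M_{H/I}$ with literally the same underlying vector spaces and actions, and then to observe that the divisibility relation defining depth cannot tell the two algebras apart.

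First I would record the structural consequences of the hypotheses. Because $I$ is a Hopf ideal it is in particular a coideal, so $\Delta(I) \subseteq I \otimes H + H \otimes I$ and $\eps(I) = 0$; hence $H/I$ inherits a Hopf algebra structure and the projection $\pi \colon H \to H/I$, $\pi(h) = \bar h$, is a morphism of Hopf algebras. Because $I \subseteq \Ann(W)$, the $H$-action on $W$ factors through $\pi$, making $W$ an $H/I$-module with $\bar h \cdot w = hw$.

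The crucial point is the compatibility of the diagonal actions on the tensor powers. Using the $H$-coproduct and then the factoring through $\pi$,
$$ h \cdot (w_1 \otimes \cdots \otimes w_n) = h_{(1)} w_1 \otimes \cdots \otimes h_{(n)} w_n = \bar{h_{(1)}} w_1 \otimes \cdots \otimes \bar{h_{(n)}} w_n, $$
and since $\pi$ is a coalgebra map the right-hand side is precisely $\bar h$ acting diagonally through the coproduct of $H/I$. Thus each $W^{\otimes n}$, and therefore each truncation $T_n(W)$, carries an $H$-module structure that factors through $\pi$ and agrees there with its $H/I$-structure. This is the single place where the coideal property of $I$ is genuinely used, and I expect it to be the only step requiring any care.

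Finally I would invoke that $\M_{H/I}$ is the full subcategory of $\M_H$ consisting of modules annihilated by $I$, and that this subcategory is closed under direct sums and direct summands; moreover a $k$-linear map between two $I$-annihilated modules is $H$-linear if and only if it is $H/I$-linear, both conditions asserting commutation with the action of every $\bar h$. Since the relation $T_{n+1}(W) \| t \cdot T_n(W)$ is phrased entirely through direct sums, summands, and module isomorphisms of objects that all lie in this subcategory, it holds in $\M_H$ for some $t$ exactly when it holds in $\M_{H/I}$. The depth-$0$ case matches as well, because $\eps_H = \eps_{H/I} \circ \pi$ identifies $k_{\eps}$ over the two algebras. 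Taking the minimum over $n$ then yields $d(W, \M_H) = d(W, \M_{H/I})$.
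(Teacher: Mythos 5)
Your proposal is correct and is essentially the paper's own argument, written out in more detail: the paper likewise notes that the coideal property of $I$ (your observation that $\pi\colon H \to H/I$ is a coalgebra map, so the diagonal $H$-action on each $W^{\otimes n}$ factors through $H/I$ and coincides with the $H/I$-diagonal action) places every $T_n(W)$ among the $I$-annihilated modules, and that the split epimorphisms witnessing $T_{n+1}(W) \| t \cdot T_n(W)$ descend and lift along $H \rightarrow H/I$, which is exactly your full-subcategory transfer step. Nothing further is needed.
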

\begin{proof}
The lemma is proven by noting that a Hopf ideal $I$ in $\Ann_H W$ is contained in the annihilator ideal of each tensor power of $W$,  since $I$ is a coideal.  Additionally, split epis as in $T_{n+1}(W) \| t \cdot T_n(W) $ descend and lift along $H \rightarrow H/I$.  
\end{proof}

Recall that the Green ring, or representation ring, of a finite-dimensional Hopf algebra $H$ over a field $k$, denoted by $A(H)$, is the free abelian group with basis consisting of  indecomposable (finite-dimensional) $H$-module isoclasses, 
with addition given by direct sum, and the multiplication in its ring structure given by the tensor product.  For example,
$K_0(H)$ is a finite rank ideal in $A(H)$.  
As shown in \cite{Fe}, a finite depth $H$-module $W$ satisfies a polynomial with integer coefficients in $A(H)$, and conversely.  Thus, an algebraic module has isoclass an algebraic element in the Green ring,
which explains the terminology.   

The main theorem in \cite[5.1]{K2013} proves that Hopf subalgebra (minimum) depth and depth of its generalized quotient $Q$ are closely related by
\begin{equation}
\label{eq: inequalityfordepth}
2d(Q,\M_R) + 1 \leq d(R,H) \leq 2d(Q, \M_R) + 2.
\end{equation}
Here one restricts $Q$ to an $R$-module, in order to obtain the better result on depth. If $R$ is a left coideal subalgebra of $H$, it is not itself a Hopf algebra, and this as a result is unavailable: in this case, we prove below (Corollary~\ref{cor-cue}) that the minimum h-depth satisfies 
\begin{equation}
\label{eq: hdepthandmoduledepth}
  d_h(R,H) = 2d(Q, \M_H) + 1. 
\end{equation}

\begin{example}
\begin{rm}
For the reader who knows something about the Drinfeld double Hopf algebra $D(H)$ of a Hopf algebra $H$ (or group $G$),
we work an example using the notation of $h \in H$ and its dual Hopf subalgebra $f \in H^*$ as subalgebras of $fh \in D(H)$ subject to relations $hf = f(S^{-1}(h\3) ? h\1) h\2$ \cite{M,Ma}.  We compute the quotient $Q$ of (coopposite) $H^*$ as a Hopf subalgebra of $D(H)$ simply by $$Q = H^* H/{H^*}^+H \cong H$$
via $\overline{fh} \mapsto f(1_H)h$ with right $H^*$-action $\overline{h}f =  f(S^{-1}(h\3)h\1) h\2$.  This gives $\overline{h} f =  \overline{h} f(1_H)$ if $H$ is cocommutative.  

Thus if $H$ is cocommutative, $d(Q, \M_{H^*}) = 0$, and by the inequality in (\ref{eq: inequalityfordepth}), 
$$ d(H^*, D(H)) \leq 2,$$
i.e., $H^*$ is a normal Hopf subalgebra in $D(H)$ (recovering some folklore with a hands-off approach). Also by Eq.~(\ref{eq: hdepthandmoduledepth}), $d_h(H^*,D(H)) = 3$ 
since $Q_{D(H)} \cong  H_H$ the regular representation, which has depth $1$ if $\dim H > 1$ \cite{K2013}.  Minimum depth $d(G, D(G))$ is analyzed in \cite{HKY}.  
\end{rm}
\end{example}

Finally we remark that if $H$ is semisimple, Eq.~(\ref{eq: hdepthandmoduledepth}) in principle computes the depth of the quotient module $Q$
in the finite tensor category $\M_H$ in terms of the symmetric matrix $\mathcal{T}$ in inequality~(\ref{matrixinequality}) (where $A = H$ and $B = R$ is semisimple
\cite[ch.\ 3]{M}): let
$d'(\mathcal{T})$ denote the least $n$ for which inequality~(\ref{matrixinequality}) holds,  then 
\begin{equation}
d(Q, \M_H) = d'(\mathcal{T}) 
\end{equation}
Thus if 
$M$ denotes the inclusion matrix $K_0(R) \rightarrow K_0(H)$, then  $2d'(\mathcal{T})+1 = d_{odd}(M^T)$ (cf.\ \cite[3.2]{LK2011}) in terms of the minimum (odd) depth of an irredundant nonnegative matrix $M$ (and its transpose $M^T$) defined in \cite[2.1]{BKK}.
\subsection{Depth of a group subalgebra pair compared with its corefree quotient pair}
Given a ring $A$ with subring $B$, say that an $A$-ideal is
\textit{relatively nice} if its intersection, the $B$-ideal $J = I \cap B$, satisfies $AJ = I$ or $JA = I$. Note that if both set equalities hold, such as when I is a relatively nice Hopf ideal in a finite-dimensional Hopf algebra with respect to a Hopf subalgebra, we are studying a type of normality condition related to the normality notion in  \cite[Section 4]{BKK}. 
Noting the canonical inclusion $B/J \into A/I$, we extend   Theorem 3.6 in \cite{KY}, where  $I$ is fully contained in $B$, to relatively nice ideals.

\begin{theorem}
\label{prop-sigma}
Let $I$ be a relatively nice ideal in $A$, where $J$ is its intersection with $B$. Minimum depth satisfies $d(B/J, A/I) \leq d(B, A)$.  
\end{theorem}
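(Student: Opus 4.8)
The plan is to descend the defining divisibility condition for the minimum depth along the quotient maps $\pi\colon A \to \overline{A} := A/I$ and $B \to \overline{B} := B/J$ by means of a suitable additive reduction functor. Since $J \subseteq I$, the two-sided $B$-action on $\overline{A}$ factors through $\overline{B}$, so $\overline{A}$ is an $\overline{B}$-bimodule and one has the standard identification $\overline{A}^{\otimes_{\overline{B}} n} \cong \overline{A}^{\otimes_B n}$ (tensoring over $B$ via the $B$-bimodule structure of $\overline{A}$): the extra relations imposed by $\otimes_{\overline{B}}$ are already present in $\otimes_B$. I also record the elementary identities $J\overline{A} = 0 = \overline{A}J$, whence $\overline{B} \otimes_B \overline{A} \cong \overline{A} \cong \overline{A} \otimes_B \overline{B}$ unconditionally.

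The one place where the relatively nice hypothesis enters is the following. If $JA = I$, then for $i \in I$ written as $i = \sum_k j_k a_k$ with $j_k \in J$ one computes $\overline{a} \otimes i = \sum_k \overline{a}\, j_k \otimes a_k = 0$ in $\overline{A} \otimes_B I$, because $\overline{a}\, j_k = 0$ (as $j_k \in J \subseteq I$ and $I$ is a right ideal); hence $\overline{A} \otimes_B I = 0$, the map $\mathrm{id} \otimes \pi\colon \overline{A} \otimes_B A \to \overline{A} \otimes_B \overline{A}$ is an isomorphism, and $\overline{B} \otimes_B A = A/JA = \overline{A}$. Symmetrically, if $AJ = I$, then $I \otimes_B \overline{A} = 0$, the map $\pi \otimes \mathrm{id}\colon A \otimes_B \overline{A} \to \overline{A} \otimes_B \overline{A}$ is an isomorphism, and $A \otimes_B \overline{B} = \overline{A}$. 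Thus, according to which equality holds, one may replace an interior tensor factor $A$ by $\overline{A}$ whenever it already carries an $\overline{A}$ immediately on its left (case $JA = I$) or on its right (case $AJ = I$); by associativity of $\otimes_B$ the vanishing $\overline{A} \otimes_B I = 0$ (resp.\ $I \otimes_B \overline{A} = 0$) kills the kernel of the barring map even after further factors are tensored on.

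With this in hand I would introduce the reduction functor appropriate to the bimodule type realizing $d(B,A)$: namely $\mathcal{G}(M) = \overline{B} \otimes_B M \otimes_B \overline{B}$ on $B$-$B$-bimodules for odd depth, and the variants $\overline{B} \otimes_B M \otimes_A \overline{A}$ or $\overline{A} \otimes_A M \otimes_B \overline{B}$ for left or right even depth. Each is additive and sends bimodule maps to bimodule maps over the appropriate pair. A straightforward induction, barring the interior factors one at a time from the side permitted by the relatively nice hypothesis and collapsing the outer $\overline{B}$'s through $\overline{B} \otimes_B \overline{A} \cong \overline{A} \cong \overline{A} \otimes_B \overline{B}$, then yields $\mathcal{G}(A^{\otimes_B n}) \cong \overline{A}^{\otimes_{\overline{B}} n}$ for every $n$; here either one of the equalities $JA = I$, $AJ = I$ suffices, since the final outer collapse $\overline{B} \otimes_B \overline{A} = \overline{A}$ is unconditional.

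Finally, suppose $B \subseteq A$ has depth $d = d(B,A)$, realized by a divisibility $A^{\otimes_B (n+1)} \| q \cdot A^{\otimes_B n}$ in the relevant bimodule category, as in Eq.~(\ref{eq: def}). Applying the additive functor $\mathcal{G}$, which preserves direct sums and split monomorphisms, together with the isomorphisms just established, gives $\overline{A}^{\otimes_{\overline{B}} (n+1)} \| q \cdot \overline{A}^{\otimes_{\overline{B}} n}$ over the corresponding pair, i.e.\ $\overline{B} \subseteq \overline{A}$ has depth $d$ as well. Hence $d(B/J, A/I) \leq d(B, A)$. The main obstacle is the middle step: showing that the reduction functor actually computes the quotient tensor powers $\overline{A}^{\otimes_{\overline{B}} n}$, for it is precisely here that the relatively nice hypothesis is indispensable. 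Without it the interior occurrences of $I$ need not vanish, the quotient map $A^{\otimes_B n} \to \overline{A}^{\otimes_{\overline{B}} n}$ would remain merely a surjection rather than a functorial isomorphism, and the depth could strictly increase upon passing to the quotient.
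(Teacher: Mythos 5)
Your argument is correct, but it is organized differently from the paper's. The paper works directly with a split monomorphism $\sigma: A^{\otimes_B (n+1)} \to q \cdot A^{\otimes_B n}$ realizing the depth in the sense of Eq.~(\ref{eq: def}), and shows that $\pi^{\otimes n}\circ \sigma$ induces a well-defined map on $(A/I)^{\otimes_{B/J}(n+1)}$: an entry $y \in I$ is written $y = \sum_j x_j a_j$ with $x_j \in J$ (using $I = JA$), the factors $x_j$ are pushed across the balanced tensor signs toward the left edge, where, lying in $J$, they are annihilated by $\pi^{\otimes n}$; the splitting is then descended by the same device. You instead apply an additive reduction functor $\mathcal{G}(M) = \overline{B} \otimes_B M \otimes_B \overline{B}$ (or its one-sided variants matched to the bimodule type of the minimal depth) to the divisibility and concentrate all the work in the single isomorphism $\mathcal{G}(A^{\otimes_B n}) \cong \overline{A}^{\otimes_{\overline{B}} n}$. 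The computational core is identical in the two proofs --- write an element of $I$ as a $J$-multiple and move the $J$-factor across $\otimes_B$ to a position where it dies --- but your packaging buys automatic well-definedness and automatic descent of the splitting, since additive functors preserve direct summands and hence the relation $M \| q\cdot N$, whereas the paper must verify both by hand; conversely, the paper's argument is more elementary and avoids the right-exactness bookkeeping hidden in your inductive barring step. One imprecision should be repaired: $\overline{A} \otimes_B I$ is not literally zero, since the rewriting $\overline{a} \otimes_B j_k a_k = \overline{a}\, j_k \otimes_B a_k$ is only legitimate after mapping into $\overline{A} \otimes_B A$ (the element $a_k$ need not lie in $I$), and the vanishing $\overline{a}\,j_k = \overline{a j_k} = 0$ uses that $I$ is a \emph{left} ideal (or simply two-sided), not a right ideal as you wrote. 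What your computation actually establishes --- and all that right-exactness of $\overline{A} \otimes_B -$ requires --- is that the image of $\overline{A} \otimes_B I \to \overline{A} \otimes_B A$ is zero, so that $\mathrm{id} \otimes \pi$ is an isomorphism onto $\overline{A} \otimes_B \overline{A}$, also after further factors are tensored on the right. With that restated, your induction and the final application of $\mathcal{G}$ go through, and the conclusion $d(B/J, A/I) \leq d(B,A)$ follows exactly as you say.
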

\begin{proof}
If $d(B,A) = \infty$, there is nothing to prove.  
Assume that there is a split monomorphism $\sigma: A^{\otimes_B (n+1)} \into q \cdot A^{\otimes_B n}$ of (natural $B$- or $A$-) bimodules for some $q, n \in \N$,
where we use balanced multilinear notation for the arguments $a_i \in A$ ($i = 1, \ldots, n+1$).  Let $\pi$ denote the canonical surjection $A \rightarrow A/I$,
where $\pi(a_i) = \overline{a_i}$.
Define $$\overline{\sigma}(\overline{a_1},\ldots,\overline{a_{n+1}}) = q \cdot \pi^{\otimes n}\sigma(a_1,\ldots,a_{n+1}) = (\pi^{\otimes n}\sigma_i(a_1,\cdots,a_{n+1}))_{i=1}^q. $$
We must show that for any $i = 1,\ldots,q$ and $y \in I$, $$\pi^{\otimes n}\sigma_i(a_1, \cdots,y,\cdots,a_{n+1}) = 0$$ to see that $\overline{\sigma}$ is well-defined.  
Suppose without loss of generality that $I = JA$, so there are finitely many $x_{j_{r+1}} \in J$ and $a_{j_{r+1}}\in A$ such that $y = \sum_{j_{r+1}} x_{j_{r+1}}a_{j_{r+1}}$, where we note  $$\sigma_i(a_1, \cdots,a_r, y,\ldots,a_{n+1}) = 
\sum_{j_{r+1}}\sigma(a_1,\ldots, a_rx_{j_{r+1}},a_{j_{r+1}},a_{r+1}, \ldots,a_{n+1}),$$
but $a_r x_{j_{r+1}} \in I = JA$. Then this process may be repeated $r$ times, until we have the terms
$$\sigma_i(a_1,\ldots,y,\ldots,a_{n+1}) = $$
$$\ \ \ \ \ \sum_{j_1,\cdots,j_{r+1}}^N x_{j_1 \cdots j_{r+1}}\sigma(a_{j_1 \cdots j_{r+1}},\ldots,a_{j_r j_{r+1}}, a_{j_{r+1}},a_{r+1},\ldots,a_{n+1})$$
where $x_{j_1 \cdots j_{r+1}} \in J$, all terms mapping to zero under $\pi^{\otimes n}$.  Similarly a splitting for $\sigma$ descends to $q \cdot (A/I)^{\otimes_{B/J} n}
\rightarrow (A/I)^{\otimes_{B/J} (n+1)}$, a splitting for $\overline{\sigma}$.    The inequality of minimum depths follows as a consequence of the characterization of depth in (\ref{eq: def}).
\end{proof}

\begin{remark}
\begin{rm}
The proof of Theorem~\ref{prop-sigma} may be adapted slightly to show that relative Hochschild cochain groups of a subring pair
$B \subseteq A$ with coefficients in an $A$-bimodule $M$ \cite{H} with left and right annihilator containing a relatively nice ideal $I$
in $A$ satisfies $C^n(A,B;M) \cong C^n(A/I,B/J; M)$ induced by the canonical algebra epi $A \rightarrow A/I$, 
a cochain isomorphism for all $n \geq 0$.
\end{rm}
\end{remark}

For the following corollary, assume  $I$ is a relatively nice Hopf ideal in
a Hopf algebra  $H$.  
\begin{prop}
\label{prop-crop}
The minimum depth satisfies $$d(R/J, H/I) \leq d(R,H) \leq d(R/J, H/I) + 1.$$ Moreover, if $d(R/J,H/I)$
is even, the two minimum depths are equal.  
\end{prop}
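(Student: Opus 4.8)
The plan is to route everything through the module depth $d(Q,\M_R)$ of the generalized quotient and the sandwich inequality~(\ref{eq: inequalityfordepth}); the lower bound $d(R/J,H/I)\le d(R,H)$ is already free, being the special case of Theorem~\ref{prop-sigma} in which the relatively nice ideal happens to be a (two-sided, $I=HJ=JH$) Hopf ideal. So the real work is the upper bound $d(R,H)\le d(R/J,H/I)+1$ together with the parity refinement.

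First I would do the Hopf-theoretic bookkeeping. Since $I$ is a Hopf ideal, the quotient map $\pi\colon H\to H/I$ is a Hopf algebra morphism; restricting it to $R$ and using $\cop(R)\subseteq R\otimes R$ and $S(R)=R$, the image $\pi(R)$ is a Hopf subalgebra of $H/I$ with kernel $J=R\cap I$, so that $R/J$ is a Hopf subalgebra of $H/I$ and $J$ is a Hopf ideal of $R$ (in particular (\ref{eq: inequalityfordepth}) applies to the pair $R/J\subseteq H/I$). Because $\eps(J)=0$ forces $J\subseteq R^+$, we get $JH\subseteq R^+H$, and hence $I\subseteq R^+H$ from $I=JH$. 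This is exactly where the relatively nice hypothesis is spent.

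Second I would identify the two generalized quotients. With $Q=H/R^+H$ and $Q'=(H/I)/(R/J)^+(H/I)$, the equality $(R/J)^+=R^+/J$ gives $Q'=H/(R^+H+I)=H/R^+H=Q$ as right $R/J$-modules, the $R/J$-action being the factorization of the $R$-action; here one uses $I\subseteq R^+H$ again. Moreover $Q\cdot J=\overline{HJ}=\overline{I}=0$ since $HJ=I\subseteq R^+H$, so the Hopf ideal $J$ of $R$ lies in $\Ann_R Q$. Lemma~\ref{lemma-modI} then gives $d(Q,\M_R)=d(Q,\M_{R/J})=d(Q',\M_{R/J})$; write $m$ for this common value.

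Finally I would apply~(\ref{eq: inequalityfordepth}) to both pairs $R\subseteq H$ and $R/J\subseteq H/I$, obtaining $d(R,H),\,d(R/J,H/I)\in\{2m+1,\,2m+2\}$. Combined with $d(R/J,H/I)\le d(R,H)$ from Theorem~\ref{prop-sigma}, the fact that both invariants lie in a single interval of width one forces $d(R,H)-d(R/J,H/I)\in\{0,1\}$, which is precisely the claimed double inequality; and if $d(R/J,H/I)$ is even it must be $2m+2$, the top of the interval, whence $d(R,H)=2m+2=d(R/J,H/I)$. I expect the only genuinely delicate points to be the bookkeeping of the first two paragraphs—checking that $J=R\cap I$ is a Hopf ideal of $R$, that $Q'\cong Q$ as $R/J$-modules, and that $J$ annihilates $Q$—each of which leans on the two-sided relatively nice condition $I=HJ=JH$ and on $J\subseteq R^+$; once the reduction to the module depth of $Q$ is in hand, the numerology is automatic.
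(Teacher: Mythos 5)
Your proposal is correct and follows essentially the same route as the paper's proof: lower bound from Theorem~\ref{prop-sigma}, identification of the quotient modules $Q' \cong Q$ via $J \subseteq R^+$ and $I \subseteq R^+H$, annihilation of $Q$ by $J$ so that Lemma~\ref{lemma-modI} applies, and then trapping both depths in the width-one interval $[2d(Q,\M_R)+1,\, 2d(Q,\M_R)+2]$ given by~(\ref{eq: inequalityfordepth}), with the same parity argument. The only cosmetic difference is that the paper derives the two-sided equality $HJ = I = JH$ from the one-sided relatively nice hypothesis by applying the antipode, a step you assume rather than justify.
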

\begin{proof}
Note that $J = R \cap I$ is a Hopf ideal in $R$ and satisfies both $HJ = I = JH$ by an application of the antipode. 
 Let $Q = H/R^+H$ be the quotient module of $R \subseteq H$, and
$Q'$ be the corresponding quotient module of $R/J \subseteq H/I$.  Since $J \subseteq R^+$ and so $I \subseteq R^+H$,
it follows from a Noether isomorphism theorem that $Q' \cong Q$ as $H$-modules.  Since $(H/R^+H)J = I/R^+H = 0$, it follows
from the lemma above that $d(Q, \M_R) = d(Q, \M_{R/J})$.  From Eq.~(\ref{eq: inequalityfordepth})
it follows that $$2d(Q,\M_R) + 1 \leq d(R,H), d(R/J, H/I) \leq 2d(Q, \M_R) + 2. $$ The proposition now 
follows from Theorem~\ref{prop-sigma}. 

If $d(R/J,H/I)$ is even, then $d(R/J,H/I) = 2 d(Q, \M_{R/J}) + 2$ from Eq.~(\ref{eq: inequalityfordepth}), but the Theorem and Eq.~(\ref{eq: inequalityfordepth}) imply that
$d(R,H) = d(R/J, H/I)$.  
\end{proof}

\begin{example}
\label{example-8dim}
\begin{rm}
Let $H_8$ be the $8$-dimensional small quantum group (at the root-of-unity $q = i$).  This is generated as an algebra by $K,E,F$ such that $K^2 = 1$, $E^2 = 0 = F^2$,
$EF = FE$, $FK = - KF$, and $EK = - KE$.  Let $R_4$ be the $4$-dimensional Hopf subalgebra
generated by $K,F$, which is isomorphic to the Sweedler algebra. (Please refer to \cite[Example 4.9]{K2013} for the coalgebra structure and details related to depth.)  
Consider the relatively nice Hopf ideal $I$ with basis $\{ F, FK, EF, EFK \}$.  Then $J$ is $\mbox{rad}\, R$ with basis $\{ F, FK \}$.  The quotient Hopf algebras $H_8/I \cong R_4$
and $R_4/J \cong \C \Z_2$ have depth $d( \C \Z_2, R_4) = 3$ computed in \cite[4.1]{Y}.  It follows from the proposition that
\begin{equation}
3 \leq d(R_4, H_8) \leq 4.
\end{equation}
\end{rm}
\end{example}

Recall that the core $\mbox{\rm Core}_G(H)$ of a subgroup pair of finite groups $H \leq G$ is the intersection of conjugate subgroups of $H$; equivalently, the largest normal subgroup contained in $H$.
Note that if $N = \mbox{\rm Core}_G(H)$, then $\mbox{\rm Core}_{G/N}(H/N)$ is the one-element group, i.e., $H/N \leq G/N$ is a \textit{corefree} subgroup. 
\begin{cor}
\label{cor-cores}
Suppose $N$ is a normal subgroup of a finite group $G$ contained in a subgroup $H \leq G$. For any ground field $k$, ordinary depth satisfies the inequality,
$$ d_k(H/N, G/N) \leq d_k(H,G) \leq d_k(H/N, G/N) + 1. $$
Moreover, if $d_k(H/N, G/N)$ is even, the two minimum depths are equal.  
\end{cor}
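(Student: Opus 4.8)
The plan is to recognize the group data as the special case of the Hopf-algebraic Proposition~\ref{prop-crop} obtained from group algebras. I would take the ambient Hopf algebra to be $kG$, the Hopf subalgebra to be $kH$ (which is genuinely a Hopf subalgebra since $H \leq G$, and finite dimensional since $G$ is finite), and let $I$ be the kernel of the canonical surjection $kG \to k(G/N)$. Because $N$ is normal in $G$ the quotient $G/N$ is a group and this map is a homomorphism of Hopf algebras, so $I$ is a Hopf ideal with $kG/I \cong k(G/N)$. Under the standing identification $d_k(H,G) = d(kH,kG)$, the corollary is then exactly the conclusion of Proposition~\ref{prop-crop} for this $I$, provided its hypotheses are checked.

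First I would describe $I$ and $J := I \cap kH$ concretely. Writing $(kN)^+$ for the augmentation ideal of $kN$, spanned by $\{\, n - 1 : n \in N \,\}$, one has $I = kG \cdot (kN)^+$. Since $N$ is contained in $H$ and is normal in $G$, it is also normal in $H$, and $J = kH \cdot (kN)^+ = \ker(kH \to k(H/N))$, so that $kH/J \cong k(H/N)$. Normality of $N$ lets me slide group elements past the generators $n-1$, giving $(kN)^+ \cdot kG = kG \cdot (kN)^+$, whence $I$ is a genuine two-sided Hopf ideal; this is precisely where normality is indispensable.

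Next I would verify that $I$ is relatively nice with $J = kH \cap I$. For relative niceness, since $1 \in H$ forces $kG \cdot kH = kG$, I compute $kG \cdot J = kG \cdot kH \cdot (kN)^+ = kG \cdot (kN)^+ = I$, so the equality $AJ = I$ required in Theorem~\ref{prop-sigma} and Proposition~\ref{prop-crop} holds; the symmetric equality $JA = I$ then follows as in the antipode argument already invoked in the proof of Proposition~\ref{prop-crop}. That $J = kH \cap I$ is equivalent to the injectivity of the induced map $k(H/N) \to k(G/N)$, which holds because $H/N \leq G/N$.

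Finally, with $I$ a relatively nice Hopf ideal and the identifications $kG/I \cong k(G/N)$, $kH/J \cong k(H/N)$ in hand, Proposition~\ref{prop-crop} immediately yields $d(k(H/N),k(G/N)) \leq d(kH,kG) \leq d(k(H/N),k(G/N)) + 1$, together with equality when the lower depth is even; translating back through $d_k(-,-) = d(k-,k-)$ gives the stated inequalities and the even-case equality. The quantitative content is thus entirely inherited from Proposition~\ref{prop-crop}, and the only real work—what I expect to be the main (if modest) obstacle—is the bookkeeping verifying that $I = \ker(kG \to k(G/N))$ is a relatively nice Hopf ideal with $I \cap kH = J$, all of which rests on the normality of $N$.
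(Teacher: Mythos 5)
Your proposal is correct and follows essentially the same route as the paper: the paper's proof also takes $I = kG\,(kN)^+$ (the kernel of $kG \to k[G/N]$, generated by $\{g - gn : g \in G,\, n \in N\}$), observes it is a relatively nice Hopf ideal with $kG/I \cong k[G/N]$, and invokes Proposition~\ref{prop-crop}. The only difference is that you spell out the bookkeeping (the identification $J = I \cap kH = kH\,(kN)^+$ and the computation $kG \cdot J = I$) that the paper leaves implicit.
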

\begin{proof}
  Note that $I = kGkN^+$ is a relatively nice Hopf ideal in $kG$ (generated by $\{ g - gn \| \, g \in G, n \in N \}$). (And  conversely any Hopf ideal of the group algebra $kG$ is of this form \cite{PQ}.) 
Also note $kG/I \cong k[G/N]$.  The corollary
now follows from the previous proposition. 
\end{proof}

This result  improves \cite[Prop.\ 3.5]{HKY} to arbitrary characteristic.  Note too that combinatorial depth satisfies a completely analogous property  given in 
\cite[Theorem 3.12(d)]{BDK}. The inequality is in a sense the best
result obtainable.  
For example, let $G = \Sigma_4$, $H = D_8$, the dihedral group of
$8$ elements embedded in $G$, and $$N = \{ (1), (12)(34), (13)(24), (14)(23) \}.$$ It is noted after \cite[6.8]{BKK} that $G/N \cong \Sigma_3$,
$H/N \cong \Sigma_2$, and that minimum depth  satisfies  $d_0(G,H) = 4$ \cite[Example 2.6]{BKK}, $d_0(H/N, G/N) = 3$.  


\section{Depth of corings}
\label{two}
  Let $A$ be a ring and $(\mathcal{C}, A, \cop, \eps)$ be an $A$-coring.  Recall that
$\mathcal{C}$ is an $A$-bimodule with coassociative coproduct $\cop: \mathcal{C} \rightarrow
\mathcal{C} \otimes_A \mathcal{C}$ and counit $\eps: \mathcal{C} \rightarrow A$, both $A$-bimodule morphisms satisfying $(\eps \otimes \id_{ \mathcal{C}})\cop = \id_{ \mathcal{C}} = (\id_{ \mathcal{C}} \otimes \eps) \cop$ \cite{BW}.  It follows
that $\mathcal{C}^{\otimes_A n} \| \mathcal{C}^{\otimes_A (n+1)}$ as $A$-bimodules for each $n \geq 1$.  For convenience in notation let $\mathcal{C}^{\otimes_A 0} = A$.  The reader may consult \cite{BW} for more on corings, their morphisms, their comodules and  useful examples, such as coalgebras over ground rings and others we bring up below.  

\begin{definition}
An $A$-coring $\mathcal{C}$ has \textbf{depth} $2n+1$ if $\mathcal{C}^{\otimes_A n} \sim \mathcal{C}^{\otimes_A (n+1)} $, where $n$ is a nonnegative integer.  If $n$ is a positive integer,
$\mathcal{C}$ has depth $2n+1$ if $\mathcal{C}^{\otimes_A (n+1)} \| m \cdot \mathcal{C}^{\otimes_A n}$ for some $m \in \N$.  Note that $\mathcal{C}$ has depth
$2n+3$ if it has depth $2n+1$.   If $\mathcal{C}$ has a finite depth, let $d(\mathcal{C}, A)$ denote
its \textbf{minimum depth}.  
  \end{definition}

\begin{example}
\begin{rm}
Given a ring extension $B \rightarrow A$, let $\mathcal{C}$ denote its Sweedler $A$-coring
$A \otimes_B A$ with coproduct $\cop: A \otimes_B A \rightarrow A \otimes_B A \otimes_B A$
($= A^{\otimes_B 3}$) given by $\cop(a \otimes_B a') = a \otimes 1 \otimes a'$
and counit $\mu: A \otimes_B A \rightarrow A$ by $\mu(a \otimes a') = aa'$ \cite{BW}.  
Note that $\mathcal{C}^{\otimes_A n} \cong A^{\otimes_B (n+1)}$ for  integers $n \geq 0$.  
 Therefore, comparing the tensor powers of $\mathcal{C}$ as natural $A$-bimodules is equivalent
to comparing the tensor powers $A^{\otimes_B n}$ as in the definition of h-depth in Definition~\ref{def-depth}.  It follows that
\begin{equation}
\label{eq: coringdepth=hdepth}
d(A \otimes_B A, A) = d_h(B,A)
\end{equation}
\end{rm}
\end{example}
An $A$-coring $\mathcal{C}$ has a \textit{grouplike} element $g \in \mathcal{C}$ if $\cop(g) = g \otimes_A g$
and $\eps(g) = 1_A$.  For example, the Sweedler $A$-coring $A \otimes_B A$ in the example
has a grouplike element $1_A \otimes_B 1_A$.  An $A$-coring $\mathcal{C}$ with grouplike $g$
has invariant subalgebra $A_g^{\mbox{co}\, \mathcal{C}} := \{ b \in A \| \, bg = gb \} := B$ (notation refers to coinvariants of $\mathcal{C}$-comodule $A$ determined by grouplike \cite[p.\ 278]{BW}).  Recall that
$\mathcal{C}$ is a \textit{Galois coring} if $A \otimes_B A \cong  \mathcal{C}$ via $a \otimes_B a' \mapsto
aga'$, a coring isomorphism of $\mathcal{C}$ with the Sweedler coring of $B \subseteq A$.  
For example, the Sweedler coring of a faithfully flat ring extension $A \supseteq B$ is Galois, since $B =
\{a \in A \| \,  a \otimes_B 1_A = 1_A \otimes_B a \}$ follows from $A_B$ or ${}_BA$ being
faithfully flat (\cite[28.6]{BW}, or see Lemma~\ref{lemma-ff}).  
\begin{example}
\label{example-Frobenius}
\begin{rm}
Recall that $A \supseteq B$ is a Frobenius extension  with $F: A \rightarrow B$ a ``Frobenius'' homomorphism and $e := \sum_i x_i \otimes_B y_i$ a ``dual bases tensor,'' satisfying  $ae= ea$ for every $a \in A$, if $A$
is a $B$-coring with coproduct $\cop:A \rightarrow A \otimes_B A, \ \cop(a) = ae$ and counit $F$, this coring being denoted by
$\mathcal{C}_{\mathrm{Frob}}$.  The more familiar conditions of the counit equations
characterize Frobenius extensions \cite{NEFE}. 
   The tensor powers of this coring are now given
by natural $B$-bimodules $\mathcal{C}_{\mathrm{Frob}}^{\otimes_B n} = A^{\otimes_B n}$.
Using Definition~\ref{def-depth} for the definition of odd depth, we obtain
\begin{equation}
\label{eq: FrobeniusCoringDepth=OddDepth}
d(\mathcal{C}_{\mathrm{Frob}}, B) = d_{\mathrm{odd}}(B,A)
\end{equation}
in terms of the minimum odd depth.  
\end{rm}
\end{example}  
\section{Depth of Coalgebra-Galois Extensions}
\label{three}

In this section we define depth of a certain coalgebra-Galois extension and see that its minimum depth takes on at least as many interesting values as subgroup depth \cite{BK, BDK, BKK, D, F, FKR}.  In contrast, the minimum depth of a Hopf-Galois extension is one or two.    

Let $k$ be a field; all unlabeled tensors are over $k$. We begin with a review of the entwining
structure of an  algebra $A$ and  coalgebra $C$.  The entwining (linear) mapping $\psi: C \otimes A \rightarrow A \otimes C$ satisfies two 
commutative pentagons and two triangles (a bow-tie diagram on \cite[p.\ 324]{BW}).  Equivalently,
$(A \otimes C, \id_A \otimes \cop_C, \id_A \otimes \eps_C)$ is an $A$-coring with respect
to the $A$-bimodule structure $a(a' \otimes c){a''} = aa' \psi(c \otimes a'')$
(or conversely defining $\psi(c \otimes a) = (1_A \otimes c) a$).

An entwining structure mapping $\psi: C \otimes A \rightarrow A \otimes C$ takes
values that may be denoted by $\psi(c \otimes a) = a_{\alpha} \otimes c^{\alpha} = a_{\beta} \otimes c^{\beta}$, suppressing linear sums of rank one tensors, and satisfies the axioms: (for all $a,b \in A, c \in C$)
\begin{enumerate}
\item  $\psi(c \otimes ab) = a_{\alpha}b_{\beta} \otimes {c^{\alpha}}^{\beta}$;
\item $\psi(c \otimes 1_A) = 1_A \otimes c$;
\item $a_{\alpha} \otimes \cop_C(c^{\alpha}) = {a_{\alpha}}_{\beta} \otimes {c\1}^{\beta} \otimes {c\2}^{\alpha}$
\item $a_{\alpha} \eps_C(c^{\alpha}) = a \eps_C(c)$,
\end{enumerate}
which is equivalent to two commutative pentagons (for axioms 1 and 3) and two commutative
triangles (for axioms 2 and 4), in an exercise.

The following is  \cite[32.6]{BW} or \cite[Theorem 2.8.1]{CMZ}. 

\begin{prop}
\label{prop-1-1}
Entwining structures $\psi: C \otimes A \rightarrow A \otimes C$ are in one-to-one correspondence with $A$-coring structures \newline
 $(A \otimes C, \id_A \otimes \cop_C, \id_A \otimes \eps_C)$.
\end{prop}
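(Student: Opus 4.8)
The plan is to set up the two maps of the claimed bijection explicitly and then check that each of the four entwining axioms translates into exactly one piece of the $A$-coring data. Throughout I fix the left $A$-module structure on $A \otimes C$ to be the canonical one, $a \cdot (a' \otimes c) = aa' \otimes c$, so that the only freedom is in the right $A$-action. Given an entwining $\psi$, I define the right action by $(a' \otimes c) \cdot a = a'\psi(c \otimes a) = a' a_{\alpha} \otimes c^{\alpha}$; conversely, given an $A$-bimodule structure with the canonical left action for which the data form a coring, I recover $\psi(c \otimes a) := (1_A \otimes c) \cdot a$. That these two assignments are mutually inverse is immediate: applying the second construction to the right action built from $\psi$ returns $1_A\,\psi(c \otimes a) = \psi(c \otimes a)$, while applying the first construction to a given bimodule and invoking the commutativity of the two actions recovers $a'\bigl((1_A \otimes c) \cdot a\bigr) = \bigl(a'(1_A \otimes c)\bigr) \cdot a = (a' \otimes c)\cdot a$.

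First I would verify that axioms (1) and (2) are equivalent to the right action being associative and unital, respectively: axiom (1) is precisely the identity $\bigl((a' \otimes c) \cdot a\bigr) \cdot b = (a' \otimes c) \cdot (ab)$ after expanding both sides through $\psi$, and axiom (2) is $(a' \otimes c) \cdot 1_A = a' \otimes c$. Compatibility of the left and right actions is automatic, since the left action touches only the first tensorand. Next, the counit $\eps_C$ piece: $\eps = \id_A \otimes \eps_C$ is visibly left $A$-linear, and its right $A$-linearity $\eps\bigl((a' \otimes c) \cdot a\bigr) = \eps(a' \otimes c)\, a$ unwinds to $a' a_{\alpha}\eps_C(c^{\alpha}) = a' a\,\eps_C(c)$, which is exactly axiom (4) (the scalar $\eps_C(c)$ being central). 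Coassociativity of $\cop = \id_A \otimes \cop_C$ and the counit equations for the coring are inherited directly from the coalgebra axioms of $C$ and require nothing from $\psi$.

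The hard part will be matching axiom (3) with the assertion that $\cop = \id_A \otimes \cop_C$ is right $A$-linear, since this forces me to make the identification $(A \otimes C) \otimes_A (A \otimes C) \cong A \otimes C \otimes C$ explicit and to track how the right $A$-action on the outer tensorand of $\mathcal{C} \otimes_A \mathcal{C}$ appears under it. Using the balancing relation together with the action defined by $\psi$, one finds that this isomorphism sends $(a' \otimes c) \otimes_A (a'' \otimes c')$ to $a'\,a''_{\alpha} \otimes c^{\alpha} \otimes c'$, whence the induced right action reads $(a \otimes d \otimes e) \cdot b = a\,(b_{\alpha})_{\beta} \otimes d^{\beta} \otimes e^{\alpha}$. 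Comparing $\cop\bigl((a \otimes c) \cdot b\bigr)$ with $\cop(a \otimes c)\cdot b$, where it suffices to take $a = 1_A$, then yields exactly $b_{\alpha} \otimes \cop_C(c^{\alpha}) = (b_{\alpha})_{\beta} \otimes (c\1)^{\beta} \otimes (c\2)^{\alpha}$, which is axiom (3). Since each axiom has thereby been shown equivalent to a single well-definedness or linearity condition, and the two constructions are inverse to one another, the one-to-one correspondence follows. I would cite \cite[32.6]{BW} and \cite[Theorem 2.8.1]{CMZ}, display only the key identifications above, and leave the remaining routine bookkeeping to the reader.
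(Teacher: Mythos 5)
Your proposal is correct and takes essentially the same route as the paper's proof: both set up the mutually inverse assignments $\psi \mapsto$ (right action $(a' \otimes c)\cdot a = a'\psi(c \otimes a)$, with canonical left action) and coring $\mapsto \psi(c \otimes a) = (1_A \otimes c)a$. The paper explicitly leaves the axiom-by-axiom verification ``in an exercise,'' and your matching of axioms (1)--(2) with associativity/unitality of the right action, (4) with right $A$-linearity of $\id_A \otimes \eps_C$, and (3) with right $A$-linearity of $\id_A \otimes \cop_C$ under the identification $(A \otimes C)\otimes_A(A \otimes C) \cong A \otimes C \otimes C$ is exactly that exercise, done correctly.
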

\begin{proof}
Given an entwining $\psi$, the obvious structure maps $(A \otimes C, \id_A \otimes \cop_C, \id_A \otimes \eps_C)$ form an $A$-coring with respect
to the $A$-bimodule structure $a(a' \otimes c){a''} = aa' \psi(c \otimes a'')$.  
Conversely, given an $A$-coring $A \otimes C$ with
coproduct $\id_A \otimes \cop_C: A \otimes C \rightarrow A \otimes C \otimes C \cong
A \otimes C \otimes_A A \otimes C$ and counit $\id_A \otimes \eps_C: A \otimes C \rightarrow
A \otimes k \cong A$, one defines $\psi(c \otimes a) = (1_A \otimes c) a$ and checks
that $\psi$ is an entwining, and the other details, in an exercise.
\end{proof}

Our primary example in this section is $A = H$, a Hopf algebra with coproduct $\cop$, counit $\eps$
and antipode $S: H \rightarrow H$, and $C$ a right $H$-module coalgebra,
i.e. a coalgebra $(C, \cop_C, \eps_C)$ and module $C_H$ satisfying $\cop_C(ch) = c\1 h\1 \otimes c\2 h\2$ 
and $\eps_C (ch) = \eps_C(c) \eps(h)$ for each $c \in C, h \in H$. An entwining mapping $\psi: C \otimes H \rightarrow  H \otimes C$ is defined by $\psi(c \otimes h) = h\1 \otimes c h\2$.
The entwining axioms are checked in a more general setup in Section~\ref{four}.

The associated $H$-coring $H \otimes C$ has coproduct $\id_H \otimes \cop_C$ and counit
$\id_H \otimes \eps_C$ with $H$-bimodule structure: ($x,y,h \in H, c \in C$) 
\begin{equation}
\label{eq: coring}
x(h \otimes c) y = xh y\1 \otimes c y\2
\end{equation}
Notice that this is the diagonal action from the right.  

\begin{prop}
\label{prop-entwinedepth}
The depth of the $H$-coring $H \otimes C$ and the depth of the $H$-module $C$ are related by
$d(H \otimes C, H) = 2d(C, \mathcal{M}_H) + 1$.  
\end{prop}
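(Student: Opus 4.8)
The plan is to reduce the comparison of the $H$-bimodule tensor powers $(H \otimes C)^{\otimes_H n}$ to the comparison of the right $H$-module tensor powers $C^{\otimes n}$, exactly mirroring the role played by Eq.~(\ref{eq: diagaction}) in the Hopf subalgebra case. First I would establish the coring analogue of that isomorphism: an $H$-bimodule isomorphism
$$(H \otimes C)^{\otimes_H n} \;\cong\; H \otimes C^{\otimes n},$$
where $H \otimes C^{\otimes n}$ carries the left regular action on the $H$-factor and the $(n+1)$-fold right diagonal action $(h \otimes c_1 \otimes \cdots \otimes c_n)\cdot y = hy\1 \otimes c_1 y\2 \otimes \cdots \otimes c_n y_{(n+1)}$. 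For $n = 2$ the map is $(h \otimes c)\otimes_H(h' \otimes c') \mapsto hh'\1 \otimes c h'\2 \otimes c'$, with inverse $h \otimes c \otimes c' \mapsto (h \otimes c)\otimes_H (1_H \otimes c')$; the identity making these mutually inverse and $H$-balanced is $(h \otimes c)\otimes_H(h' \otimes c') = (h \otimes c)\cdot h' \otimes_H(1_H \otimes c')$, which follows directly from the bimodule structure in Eq.~(\ref{eq: coring}) and uses no antipode. Iterating this normal-form reduction yields the isomorphism for every $n \geq 1$, and a coassociativity check shows the residual right action is precisely the diagonal one; for $n = 0$ both sides are ${}_HH_H$.

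The second step is to observe that $F(W) := H \otimes W$, with left regular and right diagonal action, is an additive functor from $\M_H$ to $H$-bimodules which both preserves and reflects the divisibility and similarity relations. Preservation is clear since $F$ respects direct sums. For reflection I would introduce the additive functor $G(M) := k_{\eps} \otimes_H M$, killing the left $H$-action of $M$ and retaining its right $H$-action: a short computation gives $G(F(W)) \cong W$ as right $H$-modules via $1 \otimes (h \otimes w) \mapsto \eps(h)w$, carrying the diagonal right action back to the original action on $W$. Hence $F(V) \| m \cdot F(W)$ implies $V \| m \cdot W$ after applying $G$, and similarly for $\sim$. Combined with Step 1, the bimodule comparison $(H \otimes C)^{\otimes_H (n+1)} \| m \cdot (H \otimes C)^{\otimes_H n}$ holds if and only if the module comparison $C^{\otimes (n+1)} \| m \cdot C^{\otimes n}$ holds, for every $n \geq 0$.

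Finally I would line up the two depth definitions. Since $C$ is an $H$-module coalgebra, its minimum module depth $d(C, \M_H)$ is the least $n$ with $C^{\otimes(n+1)} \| m \cdot C^{\otimes n}$, while the coring $H \otimes C$ has depth $2n+1$ exactly when $(H \otimes C)^{\otimes_H(n+1)} \| m \cdot (H \otimes C)^{\otimes_H n}$; here one uses for $n \geq 1$ the automatic divisibility $\mathcal{C}^{\otimes_H n} \| \mathcal{C}^{\otimes_H(n+1)}$ recorded in Section~\ref{two}, together with the genuine similarity at $n = 0$, which matches the module depth-$0$ condition that $C$ be a sum of copies of $k_{\eps}$. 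By Step 2 these least values coincide, so $d(H \otimes C, H) = 2d(C, \M_H) + 1$.

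The step I expect to be the main obstacle is the bookkeeping in Step 1: verifying that after the normal-form reduction the surviving right $H$-action on $H \otimes C^{\otimes n}$ is the full $(n+1)$-fold diagonal action and not some twist of it, which is where coassociativity must be tracked carefully through the iterated tensor product. A secondary subtlety is confirming that $G$ genuinely \emph{reflects} (and not merely preserves) similarity, since this is what forces the coring depth and the module depth to be governed by the same integer $n$.
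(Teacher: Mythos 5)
Your proposal is correct and is essentially the paper's own proof: the same normal-form isomorphism $(H \otimes C)^{\otimes_H n} \cong H \otimes C^{\otimes n}$ onto $H$ tensored with the right diagonal $H$-module $C^{\otimes n}$ (with the identical map and inverse), followed by transferring similarity forward with the additive functor $H \otimes -$ and backward with $k_{\eps} \otimes_H -$. Your extra attention to the $n=0$ case and to verifying that $k_{\eps} \otimes_H -$ genuinely reflects similarity is just a more explicit rendering of steps the paper leaves implicit.
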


\begin{proof}
The $n$-fold tensor product of $H \otimes C$ over $H$ reduces  to the $H$-bimodule isomorphism 
\begin{equation}
\label{eq: iso}
(H \otimes C)^{\otimes_H n} \cong H \otimes C^{\otimes n}
 \end{equation}
via the mapping 
$$ \otimes_{i=1}^n h_i \otimes c_i \longmapsto h_1 {h_2}\1 \cdots {h_n}\1 \otimes_{i=1}^{n-1} c_i {h_{i+1}}\i+1 \cdots {h_n}\i+1 \otimes c_n $$
with inverse $h \otimes c_1 \otimes \cdots \otimes c_n \mapsto h \otimes c_1 \otimes_H 1_H \otimes c_2 \otimes_H \cdots \otimes_H 1_H \otimes c_n$, 
where $H \otimes C^{\otimes n}$ has right $H$-module structure from the diagonal action by $H$:
$(h \otimes c_1 \otimes \cdots \otimes c_n)x = hx\1 \otimes c_1 x\2 \otimes \cdots \otimes c_n x_{(n+1)}$.  This follows  from Eq.~(\ref{eq: coring}), cancellations of the type $M \otimes_H H \cong M$ for modules $M_H$, and an induction on $n$.  

Suppose $d(C, \mathcal{M}_H) = n$, so that $C^{\otimes n} \sim C^{\otimes (n+1)}$ as right
$H$-modules (in the finite tensor category $\mathcal{M}_H$).  Applying an additive functor,
it follows that $H \otimes C^{\otimes n} \sim H \otimes C^{\otimes (n+1)}$ as $H$-bimodules.
Applying the isomorphism~(\ref{eq: iso})  the coring depth satisfies $d(H \otimes C, H) \leq 2d(C, \mathcal{M}_H) + 1$.

Conversely, if $d(H \otimes C, H) = 2n+1$, so that $H \otimes C^{\otimes n} \sim H \otimes C^{\otimes (n+1)}$ from Eq.~(\ref{eq: iso}) again, we apply that additive functor $k \otimes_H -$ to the similarity
and obtain the similarity of right $H$-modules, $C^{\otimes n} \sim C^{\otimes (n+1)}$.  Thus $2d(C, \mathcal{M}_H) +1 \leq d(H \otimes C, H)$.   
\end{proof}

Now suppose $R \subseteq H$ is a left coideal subalgebra of a finite-dimensional Hopf algebra;
i.e., $\cop(R)\subseteq  H \otimes R$.  Let $R^+$ denote the kernel of the counit restricted to $R$.
Then $R^+H$ is a right $H$-submodule of $H$ and a coideal by a short computation given
in \cite[34.2]{BW}.  Thus $Q := H/R^+H$ is a right $H$-module coalgebra (with a right $H$-module
coalgebra epimorphism $H \rightarrow Q$ given by $h \mapsto h + R^+H := \overline{h}$).
The $H$-coring $H \otimes Q$ has grouplike element $1_H \otimes \overline{1_H}$; in fact,
\cite[34.2]{BW} together with \cite{S} shows that this coring is Galois:
\begin{equation}
\label{eq: map}
H \otimes_R H \stackrel{\cong}{\longrightarrow} H \otimes Q
\end{equation}
via $x \otimes_R y \mapsto x y\1 \otimes \overline{y\2}$ (also noted in \cite{HKY} and in \cite{U}
for Hopf subalgebras).  That $H_R$ is faithfully flat follows from the result that $R$ is a Frobenius algebra and  $H_R$ is free \cite{S}.  Note that an inverse to (\ref{eq: map}) is given
by $x \otimes \overline{z} \mapsto xS(z\1) \otimes_R z\2$ for all $x,z \in H$. 

From Proposition~\ref{prop-entwinedepth}, Eqs.~(\ref{eq: map}) and~(\ref{eq: coringdepth=hdepth}) we note the following.    

\begin{cor}
\label{cor-cue}
For a left coideal subalgebra $R$ in a  Hopf algebra $H$, its h-depth
is related to the module depth of $Q$ by 
\begin{equation}
\label{eq: cue}
d_h(R,H) = 2d(Q, \mathcal{M}_H) + 1.
\end{equation}   
\end{cor}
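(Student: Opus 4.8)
The plan is to concatenate the three ingredients assembled immediately before the statement, so that the corollary is essentially a bookkeeping of equalities rather than a fresh computation. First I would specialize Eq.~(\ref{eq: coringdepth=hdepth}) to the ring extension $R \subseteq H$: taking $A = H$ and $B = R$ identifies the h-depth with the coring depth of the Sweedler $H$-coring, namely $d_h(R,H) = d(H \otimes_R H, H)$. This is legitimate since the setup preceding the corollary assumes $H$ finite-dimensional, so all tensor powers live among finitely many indecomposable isoclasses and every depth in sight is well-defined.

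Next I would transport this across the Galois isomorphism in Eq.~(\ref{eq: map}), $H \otimes_R H \cong H \otimes Q$, given by $x \otimes_R y \mapsto x y\1 \otimes \overline{y\2}$. The point I would emphasize is that this is an isomorphism of $H$-\emph{corings}, not merely of $H$-bimodules, so it induces $H$-bimodule isomorphisms $(H \otimes_R H)^{\otimes_H n} \cong (H \otimes Q)^{\otimes_H n}$ on every tensor power. Consequently the similarity (and divisibility) relations defining coring depth transfer verbatim in both directions, yielding $d(H \otimes_R H, H) = d(H \otimes Q, H)$.

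Finally I would invoke Proposition~\ref{prop-entwinedepth} with the module coalgebra $C = Q$. Here I would recall that $Q = H/R^+H$ is indeed a right $H$-module coalgebra, as established just before the corollary via the short computation of \cite[34.2]{BW}, so the proposition applies and gives $d(H \otimes Q, H) = 2 d(Q, \mathcal{M}_H) + 1$. Stringing the three equalities together produces $d_h(R,H) = 2 d(Q, \mathcal{M}_H) + 1$, which is the claim.

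The only genuine content to verify—and hence the main obstacle, though a mild one—is that coring depth is an isomorphism invariant, i.e.\ that the Galois map being a coring isomorphism forces \emph{equality} of coring depths and not just an inequality. This reduces to the functoriality of the $n$-fold cotensor power under coring morphisms: an invertible coring morphism yields invertible $A$-bimodule maps on each power, so split monics of the form $\mathcal{C}^{\otimes_A (n+1)} \| m \cdot \mathcal{C}^{\otimes_A n}$ descend and lift across the isomorphism, preserving minimal depth exactly. Once this invariance is noted, no further calculation is needed.
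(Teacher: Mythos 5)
Your proposal is correct and is essentially the paper's own proof: the paper likewise concatenates $d_h(R,H) = d(H \otimes_R H, H)$ from Eq.~(\ref{eq: coringdepth=hdepth}), the isomorphism $H \otimes_R H \cong H \otimes Q$ of Eq.~(\ref{eq: map}), and $d(H \otimes Q, H) = 2d(Q,\mathcal{M}_H)+1$ from Proposition~\ref{prop-entwinedepth}; moreover, your ``main obstacle'' is even milder than you suggest, since coring depth is defined purely via the $A$-bimodule structure of the tensor powers, so the $H$-\emph{bimodule} isomorphism of Eq.~(\ref{eq: map}) already transfers depth exactly and no coring-morphism functoriality is needed. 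The one blemish is your claim that finite-dimensionality of $H$ is part of the hypotheses: the corollary is stated for arbitrary Hopf algebras, and the paper's proof consists precisely in remarking that all three ingredients (the bimodule isomorphism of Eq.~(\ref{eq: map}), Proposition~\ref{prop-entwinedepth}, and Eq.~(\ref{eq: coringdepth=hdepth})) remain valid when $H$ is infinite-dimensional, a case your argument does not cover.
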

\begin{proof}
The proof is sketched above for finite-dimensional $H$.  For infinite-dimensional $H$, note
that the $H$-bimodule isomorphism in Eq.~(\ref{eq: map}) remains valid, as does Proposition~\ref{prop-entwinedepth} and Eq.~(\ref{eq: coringdepth=hdepth}).
\end{proof}
Suppose $R$ is a Hopf subalgebra of $H$. Then $Q$ is an $R$-module coalgebra by restriction.
  A similar argument to the one above shows that $d(R,H)$ and $d(Q, \mathcal{M}_R)$ 
satisfy the inequalities in (\ref{eq: inequalityfordepth}).  

Finally we recall that a \textit{$C$-Galois extension} $A \supseteq B$, where $C$ is a coalgebra and
$A$ a right $C$-comodule via coaction $\delta: A \rightarrow A \otimes C$,  the subalgebra of coinvariants is characterized by satisfying $B =
\{ b \in A \| \, \forall a \in A, \delta(ba) = b\delta(a) \}$ and $\beta: A \otimes_B A \mapsto
A \otimes C$ given by $\beta(a \otimes a') = a \delta(a')$ is bijective.  For example,
a left coideal subalgebra $R \subseteq H$ is a coalgebra-Galois extension with respect to 
the ($H$-module) coalgebra $Q$, as sketched above (the details are in \cite[34.2]{BW}).  
Of course, this applies to Hopf subalgebras and more particularly to finite group algebra extensions.
Then we see that coalgebra-Galois extensions have at least the range of values computed for
subgroup depth \cite{BDK, BKK, D, F, FKR, HHP}.

\subsection{A faithfully flat interlude} Let $C$ be a coalgebra and $H$-module quotient of $H$, with canonical epi $H \rightarrow C$ of right $H$-module coalgebras, and $A$ is a right $H$-comodule algebra, with the obvious $C$-comodule coaction $\delta: A \rightarrow A \otimes C$ given by $\delta(a) = a\0 \otimes \overline{a\1}$.  Define the subalgebra $B = \{ b \in A \, | \, \delta(b) = b \otimes \overline{1_H} \}$.  Now suppose that $D \subseteq B$ is a subalgebra for which the canonical (Galois) mapping $\beta: A \otimes_D A \stackrel{\cong}{\longrightarrow} A \otimes C$
given by  $a \otimes_D a' \mapsto a\delta(a') = a{a'}\0 \otimes \overline{ {a'}\1}$, is an isomorphism.
If either of the natural modules $A_D$ or ${}_DA$ is faithfully flat, then $D = B$.  
This follows from noting that for each $b \in B$,  $\beta(1_A \otimes_D b) = b \otimes \overline{1_H} = \beta(b \otimes_D 1_A)$ and the next lemma. 
\begin{lemma}
\label{lemma-ff}
If $b \otimes_D 1 = 1 \otimes_D b$ for $b \in A \supseteq D$ with $D$ a subring of $A$ such that
$A_D$ or ${}_DA$ is faithfully flat, then $b \in D$.
\end{lemma}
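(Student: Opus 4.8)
The plan is to read this as the familiar faithfully flat descent statement that, for the trivial coaction, the coinvariants of $A$ inside $A \otimes_D A$ are exactly $D$; I would prove it by passing to the cokernel $C := A/D$ of the inclusion $D \into A$. I treat the case $A_D$ faithfully flat; the case ${}_DA$ faithfully flat follows by interchanging left and right throughout, starting from the same identity $b \otimes_D 1 = 1 \otimes_D b$.

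First I record the trivial inclusion $D \subseteq \{ a \in A \| \, a \otimes_D 1 = 1 \otimes_D a \}$, since any $d \in D$ slides across a tensor balanced over $D$. For the reverse inclusion, regard $A$ as a $D$-bimodule and let $\pi : A \to C := A/D$ be the quotient map of left $D$-modules, so that $0 \to D \to A \to C \to 0$ is exact. As $A_D$ is flat, applying $A \otimes_D -$ gives a short exact sequence $0 \to A \to A \otimes_D A \to A \otimes_D C \to 0$, where I identify $A \otimes_D D \cong A$, the first map being $a \mapsto a \otimes_D 1$ and the second being $\id_A \otimes \pi$.

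Now I apply $\id_A \otimes \pi$ to the hypothesis. Since $1_A \in D$ forces $\pi(1_A) = 0$, the term $b \otimes_D 1$ is annihilated, while $1 \otimes_D b$ is sent to $1_A \otimes_D \bar b$ with $\bar b := \pi(b)$. Thus $1_A \otimes_D \bar b = 0$ in $A \otimes_D C$, and everything reduces to showing that this forces $\bar b = 0$, equivalently $b \in D$.

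This final implication is the only real content and the main obstacle, since I must turn the vanishing of a single elementary tensor into the vanishing of $\bar b$ itself while keeping the one-sided actions straight; this is precisely where both flatness and faithfulness enter. I pass to the cyclic left $D$-submodule $N := D\bar b \subseteq C$; flatness of $A_D$ makes $A \otimes_D N \into A \otimes_D C$ injective, so $1_A \otimes_D \bar b = 0$ already in $A \otimes_D N$. Writing $N \cong D/K$ for the left ideal $K := \Ann_D(\bar b) = \{ d \in D \| \, d \bar b = 0 \}$, I identify $A \otimes_D N \cong A/AK$, under which $1_A \otimes_D \bar b$ becomes the class of $1_A$; hence $1_A \in AK$, so $AK = A$ and $A \otimes_D (D/K) = A/AK = 0$. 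Faithfulness of the flat module $A_D$ then yields $D/K = 0$, i.e. $1_A \in K$, whence $\bar b = 1_A \cdot \bar b = 0$ and $b \in D$.
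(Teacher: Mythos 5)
Your proof is correct, and every step checks out: applying $\id_A \otimes \pi$ kills $b \otimes_D 1$ and sends $1 \otimes_D b$ to $1_A \otimes_D \bar b$; flatness of $A_D$ lets you see this vanishing already in $A \otimes_D D\bar b$; the identifications $D\bar b \cong D/K$ with $K = \{ d \in D \mid d\bar b = 0 \}$ and $A \otimes_D (D/K) \cong A/AK$ turn it into $1_A \in AK$; and faithfulness applied to $D/K$ finishes. This is the same descent principle as the paper's proof---both rest on the characterization that $A \otimes_D N = 0$ forces $N = 0$ for left $D$-modules $N$---but the mechanism differs. The paper argues globally: it forms $B' = \{ b \in A \mid b \otimes_D 1_A = 1_A \otimes_D b \}$, sets $N = B'/D$, and asserts $A \otimes_D N = 0$ ``by an exercise with a commutative square,'' the implicit content being that flatness makes $A \otimes_D B' \to A \otimes_D A$ injective while $a \otimes_D b = ab \otimes_D 1$ for $b \in B'$, so that $A \cong A \otimes_D D \to A \otimes_D B'$ is surjective and its cokernel $A \otimes_D (B'/D)$ vanishes; faithful flatness then kills $B'/D$ in one stroke. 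You instead localize at a single element, replacing the commutative-square exercise by the cyclic module $D\bar b$ inside $A/D$ and its annihilator left ideal $K$. Your route makes the key vanishing completely explicit (the left ideal $AK$ contains $1_A$) and applies faithfulness only to a cyclic module, so nothing is left to the reader; the paper's route is shorter and identifies the entire coinvariant subring $B'$ with $D$ at once rather than element by element.
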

\begin{proof}
Recall that a flat module $A_D$ is faithfully flat iff for each module ${}_DN$ such that $A \otimes_D N = 0$,
it follows that $N = 0$.  Now form the module $N = B'/D$, where $B' = \{ b \in A \, | \,  b \otimes_D 1_A = 1_A \otimes b \}$.  By an exercise with a commutative square, $A \otimes_D N = 0$, whence $D = B'$.  
\end{proof}

\subsection{A normal element characterization of left ad-stabity for left coideal subalgebras}
Suppose $R$ is left coideal subalgebra of a finite-dimensional Hopf algebra $H$.  Let $Q$ be its quotient
right $H$-module coalgebra defined above.  By Skryabin's Freeness Theorem \cite{S} $\dim R$ divides $\dim H$,
and $(R, \eps)$ is an augmented Frobenius algebra, so $R$ has a nonzero right integral $t_R$ unique up to scalar multiplication \cite[Ch. 6]{NEFE}.  Then one proves just as in \cite[Lemma 3.2]{K2013} that
\begin{equation}
\label{isomorph}
Q \stackrel{\cong}{\longrightarrow} t_R H
\end{equation}
via $h + R^+H \mapsto t_R h$.  

Recall that a subalgebra $R$ in a Hopf algebra $H$ is stable under the left adjoint action of $H$ 
if $h\1 r S(h\2) \in R$ for all $r \in R$, $h \in H$.  One briefly says that $R$ is left ad-stable
(or left normal \cite{B2012}).  In section~3 of \cite{B2012}, Burciu shows how to prove the following.

\begin{lemma}(cf.\ \cite{B2012})
\label{lem-straightenedout}
 A left coideal subalgebra $R$ in a finite-dimensional Hopf algebra $H$ is left ad-stable if and only if  $HR^+ \subseteq  R^+H$ if and only if the subalgebra $R \subseteq H$ has right depth $2$.
\end{lemma}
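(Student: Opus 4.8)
The plan is to prove the two equivalences separately: the \emph{algebraic} equivalence between left ad-stability and the inclusion $HR^+ \subseteq R^+H$, and the \emph{depth-theoretic} equivalence between that inclusion and right depth $2$. Throughout I will use the canonical epimorphism $\pi \colon H \to Q$, $h \mapsto \overline{h}$, which is right $H$-linear, the induced coaction $\rho \colon H \to H \otimes Q$, $\rho(h) = h\1 \otimes \overline{h\2}$, and two facts that I isolate first. First, the inclusion $HR^+ \subseteq R^+H$ is \emph{exactly} the statement that $R$ acts trivially on $Q$ from the right, i.e. $\overline{g}\cdot s = \eps(s)\overline{g}$ for all $g \in H$, $s \in R$; indeed $\overline{g}\cdot s^+ = \overline{g s^+}$ vanishes for every $s^+ \in R^+$ precisely when $g R^+ \subseteq R^+H$. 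Second, since $H_R$ is free, hence faithfully flat, by Skryabin's theorem \cite{S}, the coinvariants description $R = H^{\mathrm{co}\,Q} = \{x \in H \mid \rho(x) = x \otimes \overline{1_H}\}$ holds by the same faithfully flat argument as in Lemma~\ref{lemma-ff} (one inclusion being immediate from $R$ being a left coideal subalgebra, so that $\overline{r\2} = \eps(r\2)\overline{1_H}$).

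For the algebraic equivalence I would use the identity $hr = (h\1 \lact r)\,h\2$, where $h \lact r := h\1 r S(h\2)$, together with $\eps(h \lact r) = \eps(h)\eps(r)$. If $R$ is left ad-stable, then for $s^+ \in R^+$ each $g\1 \lact s^+ \in R$, so its image in $Q$ is $\eps(g\1 \lact s^+)\overline{1_H} = \eps(g\1)\eps(s^+)\overline{1_H} = 0$; applying $\pi$ to $g s^+ = (g\1 \lact s^+)g\2$ and using right $H$-linearity of $\pi$ gives $\overline{g s^+} = 0$, which is the inclusion $HR^+ \subseteq R^+H$. Conversely, assume $HR^+ \subseteq R^+H$, equivalently that $R$ acts trivially on $Q$. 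Using that $\rho$ is right $H$-linear for the diagonal action, together with the triviality $\overline{g\, r\2} = \eps(r\2)\overline{g}$ (valid since $r\2 \in R$ by the left coideal property) and the antipode relation $x\1 S(x\2) = \eps(x)1_H$ applied to the two inner legs, a short Sweedler computation collapses $\rho(h \lact r)$ to $(h \lact r)\otimes \overline{1_H}$. Hence $h \lact r \in H^{\mathrm{co}\,Q} = R$, so $R$ is left ad-stable.

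For the depth-theoretic equivalence I would pass through the Galois coring of Eq.~(\ref{eq: map}), namely $H \otimes_R H \cong H \otimes Q$, an isomorphism of $H$-bimodules for the diagonal right action. By the definition of right depth $2$ (Definition~\ref{def-depth}) and the always-valid divisibility ${}_HH_R \| {}_H (H\otimes_R H)_R$ from Eq.~(\ref{eq: divides}) in the $H$-$R$ structure, right depth $2$ is equivalent to ${}_H(H\otimes Q)_R \| q\cdot {}_HH_R$ for some $q$. The plan is to show this bimodule divisibility is equivalent to triviality of $Q_R$. For the forward direction I would apply the additive functor $k_{\eps}\otimes_H -$, which sends $H \otimes Q$ to $Q$ and $H$ to $k_{\eps}$ as right $R$-modules, yielding $Q_R \| q\cdot (k_{\eps})_R$; by Krull--Schmidt in finite dimensions this forces $Q_R$ to be a direct sum of copies of $k_{\eps}$. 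For the converse, if $Q_R \cong (k_{\eps})^{\oplus m}$ then, since $H \otimes k_{\eps} \cong H$ as $H$-$R$-bimodules (the diagonal right $R$-action collapsing to right multiplication), one gets ${}_H(H\otimes Q)_R \cong m\cdot {}_HH_R$. Combined with the first paragraph, triviality of $Q_R$ is exactly $HR^+ \subseteq R^+H$, closing the cycle.

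The main obstacle I anticipate is this last, depth-theoretic step: correctly tracking the $H$-$R$-bimodule structure through the Galois isomorphism and through the functors $k_{\eps}\otimes_H -$ and $H \otimes_k -$, and in particular verifying that $H\otimes k_{\eps} \cong H$ and that the diagonal right action of $R$ on $H\otimes Q$ restricts compatibly, so that right depth $2$ genuinely reduces to the single condition that $R$ act trivially on $Q$. The coinvariants identity $R = H^{\mathrm{co}\,Q}$ underlying the converse of the algebraic step likewise rests on the faithful flatness of $H_R$, so invoking Skryabin's freeness theorem \cite{S} at that point is essential.
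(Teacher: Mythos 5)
Your proposal is correct and follows essentially the same route as the paper: both directions of the ad-stability/inclusion equivalence rest on the adjoint-action identity and on $R = H^{\mbox{co}\, Q}$ obtained from the Galois isomorphism~(\ref{eq: map}) plus Skryabin's faithful flatness via Lemma~\ref{lemma-ff}, and the depth-$2$ equivalence uses the same two moves as the paper (triviality of $Q_R$ turning the Galois map into an $H$-$R$-bimodule isomorphism onto $(\dim Q)\cdot {}_HH_R$, and the functor $k_{\eps} \otimes_H -$ with the annihilator argument for the converse). The only cosmetic difference is that you spell out the coinvariant computation $\rho(h\1 r S(h\2)) = h\1 r S(h\2) \otimes \overline{1_H}$ that the paper outsources to \cite[3.4.2]{M}.
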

\begin{proof} For the convenience of the reader, we sketch the proof.  If $R$ is left ad-stable in $H$,
$r \in R^+$ and $h \in H$, then $hr = h\1 r S(h\2) h\3 \in R^+H$.  

If $HR^+ \subseteq R^+H$, the isomorphism $\beta: H \otimes_R H \stackrel{\cong}{\rightarrow}
H \otimes Q$ in Eq.~(\ref{eq: map}) is an $H$-$R$-bimodule morphism, since $Q_R$ is trivial.
Hence ${}_H H \otimes_R H_R \cong (\dim Q) \cdot {}_H H_R$ and $R$ has right depth $2$ in $H$.

If ${}_H H \otimes_R H \oplus * \cong n \cdot {}_H H_R$, then applying $k \otimes_H -$ yields
$Q_R \oplus * \cong n \cdot k_R$.  Since $R^+ = \Ann\, k_R$, it follows easily that $HR^+ \subseteq R^+H$.

If $HR^+ \subseteq R^+H$, then $R^+ H$ is a bi-ideal in $H$,  $Q$ is a bialgebra and has a natural $H$-bimodule structure  from multiplication in $H$.  Also $\beta$ defined above satisfies $\beta(1_H \otimes_R h) = 
\beta(h \otimes_R 1_H)$ for $$h \in H^{\mbox{co}\, Q} = \{ x \in H | x\1 \otimes \overline{x\2} = x \otimes \overline{1} \}. $$
Of course, $R \subseteq H^{\mbox{co}\, Q}$.  
Since $\beta$ is bijective, $h \otimes_R  1 = 1 \otimes_R h$, thus by Lemma~\ref{lemma-ff},
$R = H^{\mbox{co}\, Q}$.  But $H^{\mbox{co}\, Q}$ is  left ad-stable by an argument used in
\cite[3.4.2]{M} modified slightly by the remark in the first sentence of this paragraph.  
\end{proof}

The following generalizes \cite[5.2]{K2013}.
\begin{theorem}
\label{th-leftrightleft}
A left coideal subalgebra $R$ in a finite-dimensional Hopf algebra $H$ is left ad-stable if and only if its right integral $t_R$ is a normal element in $H$.  
\end{theorem}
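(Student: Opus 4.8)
The plan is to prove both implications through the characterization of left ad-stability furnished by Lemma~\ref{lem-straightenedout}, namely that $R$ is left ad-stable iff $HR^+\subseteq R^+H$, together with the description of $R^+H$ coming from the isomorphism $Q\stackrel{\cong}{\to}t_RH$ of Eq.~(\ref{isomorph}): since $\overline h\mapsto t_Rh$ is an isomorphism, $R^+H$ is exactly the kernel of $h\mapsto t_Rh$, that is, $R^+H=\{h\in H\mid t_Rh=0\}$. Throughout I use that $t_R$ is a right integral of $R$, so $t_Rr=\eps(r)t_R$ for all $r\in R$, and hence that $\int_R^r:=\{t\in R\mid tR^+=0\}=k\,t_R$ is one-dimensional.

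For the direction ``$t_R$ normal $\Rightarrow$ left ad-stable'', I would assume $t_RH=Ht_R$. Given $h\in H$ and $r\in R^+$, I write $t_Rh\in t_RH=Ht_R$ as a sum $\sum_i g_it_R$ and compute $t_R(hr)=(t_Rh)r=\sum_i g_i(t_Rr)=\eps(r)\sum_i g_it_R=0$, because $\eps(r)=0$. Hence $hr\in R^+H$, so $HR^+\subseteq R^+H$, and Lemma~\ref{lem-straightenedout} gives left ad-stability. This half is essentially formal.

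For the converse, assume $R$ is left ad-stable, so that the left adjoint action $h\triangleright r=h_{(1)}rS(h_{(2)})$ makes $R$ an $H$-module algebra (it lands in $R$ by ad-stability and satisfies the module-algebra axioms inherited from $H$ acting on itself). The key step is to show that the line $\int_R^r=k\,t_R$ is stable under this action. I would first note that $\eps_R$ is $H$-linear into the trivial module, since $\eps(h\triangleright r)=\eps(h)\eps(r)$, so that $R^+=\Ker(\eps_R)$ is an $H$-submodule; then the evaluation map $\Phi\colon R\to\Hom_k(R^+,R)$, $\Phi(t)(r)=tr$, is $H$-linear for the adjoint and internal-hom structures, whence $\int_R^r=\Ker\Phi$ is an $H$-submodule. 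Being one-dimensional, it is governed by a character $\chi\colon H\to k$, i.e. $h_{(1)}t_RS(h_{(2)})=\chi(h)\,t_R$ for all $h$. Normality of $t_R$ then drops out by two convolution computations: $h\,t_R=\sum h_{(1)}t_RS(h_{(2)})h_{(3)}=\sum\chi(h_{(1)})\,t_Rh_{(2)}\in t_RH$, giving $Ht_R\subseteq t_RH$, and, substituting $h=\sum\chi^{-1}(g_{(1)})g_{(2)}$ with $\chi^{-1}=\chi\circ S$, the identity $t_Rg=\sum\chi^{-1}(g_{(1)})\,g_{(2)}t_R\in Ht_R$, giving $t_RH\subseteq Ht_R$. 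Hence $t_RH=Ht_R$.

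The main obstacle is the middle step of the converse: proving that the adjoint action fixes the line of right integrals, equivalently producing the character $\chi$ with $h_{(1)}t_RS(h_{(2)})=\chi(h)t_R$. Finite-dimensionality of $H$, hence of the augmented Frobenius algebra $R$, is what guarantees that $\int_R^r$ is one-dimensional, so that ad-stability of this line \emph{automatically} yields a character rather than merely an invariant subspace; the $H$-linearity of $\Phi$ is a routine check with the module-algebra axioms, and the two closing identities are formal manipulations with the antipode and counit. The remaining implication and the identification $R^+H=\{h\mid t_Rh=0\}$ are immediate from the tools already assembled in this section.
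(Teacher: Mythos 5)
Your proof is correct, and one half of it takes a genuinely different route from the paper's. The direction ``$t_R$ normal $\Rightarrow$ left ad-stable'' is the same argument as the paper's: read off from Eq.~(\ref{isomorph}) that $R^+H$ is the kernel of left multiplication by $t_R$ (the paper phrases this as $\Ann\, Q_R \supseteq R^+$), deduce $HR^+ \subseteq R^+H$ from $Ht_R = t_RH$, and quote Lemma~\ref{lem-straightenedout}. In the converse direction both proofs funnel through the same key identity $h_{(1)}t_R S(h_{(2)}) = \chi(h)\, t_R$ for a character $\chi$ of $H$, and both finish with the same convolution manipulations ($ht_R = \chi(h_{(1)})\, t_R h_{(2)} \in t_RH$, plus an antipode trick for the reverse inclusion); the difference lies in how that identity is obtained. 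The paper, following \cite[5.2]{K2013}, recycles Lemma~\ref{lem-straightenedout} and Eq.~(\ref{isomorph}): ad-stability makes $Q_R$ trivial, transporting triviality through $Q \cong t_RH$ gives $t_R x r = \eps(r)\, t_R x$ for all $x \in H$, $r \in R$, and specializing $x = S(h_{(2)})$ exhibits $h_{(1)}t_RS(h_{(2)})$ as a right integral in $R$, hence a scalar multiple of $t_R$. You instead argue inside the module category: ad-stability makes $R$ an $H$-module algebra under the adjoint action, $R^+$ is a submodule, the evaluation map $R \to \Hom_k(R^+,R)$ is $H$-equivariant, so the one-dimensional space of right integrals is an $H$-submodule and therefore carries a character. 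Your mechanism is more self-contained for this direction --- it bypasses $Q$ and Eq.~(\ref{isomorph}) entirely, needing only the one-dimensionality of the integral space of the augmented Frobenius algebra $R$ --- and it makes the character $\chi$ explicit, at the cost of setting up the internal-hom equivariance; the paper's version is shorter because the quotient-module machinery ($Q$, its triviality as an $R$-module, and the isomorphism onto $t_RH$) is already standing from earlier in the section.
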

\begin{proof}
($\Rightarrow$)  One argues as in \cite[5.2]{K2013}, using Eq.~(\ref{isomorph}) and $Q$ is under the hypothesis a trivial right $R$-module, that  $h\1 t_R Sh\2$
is a nonzero right integral in $R$ for any $h \in H$, so that $$ht_R = h\1 t_R S(h\2) h_3 \in t_R H.$$
Hence, $Ht_R \subseteq t_R H$.  For the opposite inclusion,  note that $$t_R S(h) = S(h\1) h\2 t_R S(h\3) \in Ht_R.$$  

($\Leftarrow$) If $Ht_R = t_R H$, then $\Ann \, Q_R \supseteq R^+$.  Thus $HR^+ \subseteq  R^+H$.
We conclude that $R$ is left ad-stable in $H$ from Lemma~\ref{lem-straightenedout}.  
\end{proof}
One says that a subalgebra $R$ of a Hopf algebra $H$ is right ad-stable if $S(h\1)rh\2 \in R$
for all $h \in H, r \in R$. 
Applying the antipode anti-automorphism to the above theorem, one obtains (left as an exercise) the correct formulation and proof of the opposite (and equivalent) theorem.

\begin{cor}
\label{rightversion}
A right coideal
subalgebra is right ad-stable in a finite-dimensional Hopf algebra $H$ iff its left integral $t_L$ is a normal element in $H$.  
\end{cor}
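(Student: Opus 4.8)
The plan is to transport Theorem~\ref{th-leftrightleft} across the antipode, which for a finite-dimensional $H$ is a bijective algebra anti-automorphism and coalgebra anti-homomorphism, so that $\cop(S(h)) = S(h\2) \otimes S(h\1)$. First I would check that if $R$ is a right coideal subalgebra (so $\cop(R) \subseteq R \otimes H$), then $R' := S(R)$ is a left coideal subalgebra: applying $S \otimes S$ to the flipped coproduct carries $R \otimes H$ into $H \otimes S(R)$, while $S$ restricts to an algebra anti-isomorphism $R \to R'$. Bijectivity of $S$ gives $R = S^{-1}(R')$, which I will use repeatedly to pass quantifiers back and forth.

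The key step is the dictionary between the two adjoint actions. I would compute, for $g = S(h)$ and $s = S(r)$ with $r \in R$,
\[
g\1\, s\, S(g\2) = S(h\2)\, S(r)\, S^2(h\1) = S\bigl(S(h\1)\, r\, h\2\bigr),
\]
using $\cop(S(h)) = S(h\2) \otimes S(h\1)$ and the anti-multiplicativity of $S$. Since $S$ is bijective, $g = S(h)$ ranges over all of $H$, and $S(R) = R'$; hence ``$S(h\1) r h\2 \in R$ for all $h \in H, r \in R$'' (right ad-stability of $R$) is equivalent to ``$g\1 s S(g\2) \in R'$ for all $g \in H, s \in R'$'' (left ad-stability of $R'$).

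Next I would match the integrals: if $t_L$ is a left integral of the augmented algebra $R$, i.e.\ $r t_L = \eps(r) t_L$, then applying $S$ yields $S(t_L) S(r) = \eps(S(r)) S(t_L)$, so $S(t_L)$ is a right integral of $R'$, exactly the datum appearing in Theorem~\ref{th-leftrightleft} for the left coideal subalgebra $R'$. Finally, since $S$ is a bijective anti-automorphism of $H$, an element $x$ satisfies $xH = Hx$ if and only if $S(x)H = HS(x)$, so $t_L$ is normal in $H$ iff $S(t_L)$ is. Chaining the equivalences gives: $R$ right ad-stable $\Leftrightarrow$ $R'$ left ad-stable $\Leftrightarrow$ (Theorem~\ref{th-leftrightleft}) $S(t_L)$ normal $\Leftrightarrow$ $t_L$ normal, which is the claim. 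The only point needing care is the bookkeeping of the $S^2$ term in the adjoint-action identity together with the appeal to the bijectivity of $S$ to replace the quantifier over $g = S(h)$ by one over all of $H$; the remaining verifications are a routine transport of structure along the antipode.
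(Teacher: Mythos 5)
Your proposal is correct and is exactly the paper's intended argument: the paper derives Corollary~\ref{rightversion} by ``applying the antipode anti-automorphism to the above theorem,'' leaving the details as an exercise, and your write-up simply carries out that exercise. All four transports under $S$ (right coideal subalgebra to left coideal subalgebra, right ad-stability to left ad-stability via $g\1 s S(g\2) = S(S(h\1) r h\2)$, left integral to right integral, and normality of $t_L$ to normality of $S(t_L)$) are verified correctly, so nothing is missing.
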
 
\begin{example}
\begin{rm}
Consider the $8$-dimensional small quantum group $H = H_8$ given as an algebra in Example~\ref{example-8dim}, with coalgebra
structure given by $\cop(K) = K \otimes K$, $\cop(E) = E \otimes 1 + K \otimes E$, and $\cop(F) = F \otimes K + 1 \otimes F$.  It follows that $S(K) = K$, $S(E) = -KE$ and $S(F) = - FK$.  

Consider the $2$-dimensional Frobenius subalgebra $R$ generated by $E$ (the ``ring of dual numbers'').  Notice that $R$ is a left coideal subalgebra, since $\cop(E) \in H \otimes R$.  
It is left ad-stable since $FEK + ES(F) = 0$.  Also $t_R = E$ ($= t_L$) and $HE = EH$ is the $4$-dimensional
vector subspace of $H$ with basis $\{ E, EF, EK, EFK \} $: equivalently, $HR^+ = R^+ H$.  

However, $R$ is not right ad-stable, since $S(F)EK + EF = 2EF \not \in R$.  The Frobenius subalgebra
of dimension $2$ generated by $F$ in $H$ is however a right coideal subalgebra that is right ad-stable
with normal integral element, and provides an example of Corollary~\ref{rightversion}.   This example does not
contradict the correct  formulation of the opposite of Lemma~\ref{lem-straightenedout}: 
\end{rm}
\end{example} 
\begin{prop}[cf.\ \cite{B2012}]
A right coideal subalgebra $R$ of $H$ 
is right ad-stable iff $R^+H \subseteq HR^+$ iff $R$ has left depth $2$ in $H$. 
\end{prop}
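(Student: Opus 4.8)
The plan is to obtain this proposition by applying the antipode anti-automorphism $S$ to the already-proven Lemma~\ref{lem-straightenedout}, exactly as the paragraph preceding Corollary~\ref{rightversion} instructs. The key structural observation is that $S$ sends left coideal subalgebras to right coideal subalgebras and interchanges the left and right adjoint actions. Concretely, if $R$ is a right coideal subalgebra of $H$, meaning $\cop(R) \subseteq R \otimes H$, then I would first verify that $S(R)$ is a left coideal subalgebra: applying the anti-coalgebra property $\cop(S(h)) = S(h\2) \otimes S(h\1)$ to elements of $R$ shows $\cop(S(R)) \subseteq H \otimes S(R)$. Next, the right ad-stability condition $S(h\1)rh\2 \in R$ for $R$ translates under $S$ into the left ad-stability condition $h\1 S(r) S(h\2) \in S(R)$, so that $R$ is right ad-stable if and only if $S(R)$ is left ad-stable.

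With these translations in hand, I would invoke Lemma~\ref{lem-straightenedout} applied to the left coideal subalgebra $S(R)$: it is left ad-stable iff $H\,S(R)^+ \subseteq S(R)^+ H$ iff $S(R)$ has right depth $2$ in $H$. The final step is to push each of these three equivalent conditions back through $S$ to $R$ itself. For the middle condition, since $S$ is an algebra anti-automorphism fixing the counit, $S(R)^+ = S(R^+)$, and applying $S$ to the inclusion $H\,S(R^+) \subseteq S(R^+)H$ reverses the order of multiplication, yielding precisely $R^+ H \subseteq H R^+$. For the depth condition, I would use that $S$ induces an anti-isomorphism $H \cong H^{\mathrm{op}}$ of algebras which converts the right depth $2$ condition ${}_H H \otimes_{S(R)} H_{S(R)} \sim {}_H H_{S(R)}$ into the left depth $2$ condition for $R \subseteq H$ by reversing all the bimodule structures.

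The main obstacle I expect is bookkeeping the left/right swaps carefully rather than any conceptual difficulty, since $S$ reverses multiplication and transposes comultiplication simultaneously, and one must confirm that the three conditions transform in a mutually consistent way. In particular, the depth-$2$ equivalence requires knowing that $S: H \to H^{\mathrm{op}}$ is genuinely invertible as an anti-automorphism (which holds since $H$ is finite-dimensional, hence $S$ is bijective), so that the similarity of bimodules is preserved in both directions. Given that Lemma~\ref{lem-straightenedout} already packages all three equivalences, no new computation with the Galois map $\beta$ or the faithfully flat descent of Lemma~\ref{lemma-ff} is needed; the proposition follows formally once the dictionary between the left and right sides under $S$ is set up. I would therefore present the argument as a short ``apply $S$'' reduction, leaving the routine verification of the coalgebra and module-structure transformations to the reader as the exercise the preceding paragraph already designates it to be.
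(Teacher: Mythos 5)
Your proposal is correct and follows essentially the same route as the paper: the Proposition is presented there as the antipode-transported ``opposite'' of Lemma~\ref{lem-straightenedout}, with the proof explicitly left as the apply-$S$ exercise that you carry out. Your dictionary under the anti-automorphism $S$ (bijective since $H$ is finite-dimensional) --- right coideal subalgebras to left coideal subalgebras, right ad-stability of $R$ to left ad-stability of $S(R)$, $S(R)^+ = S(R^+)$ with the inclusion $HS(R)^+ \subseteq S(R)^+H$ reversing to $R^+H \subseteq HR^+$, and right depth $2$ to left depth $2$ via reversal of bimodule structures --- is exactly the intended bookkeeping.
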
 
\section{The coring of an entwining structure of a comodule algebra and a module coalgebra}
\label{four}

Let $H$ be a Hopf algebra.  Suppose $A$ is a right $H$-comodule algebra, i.e., there is a coaction
$\rho: A \rightarrow A \otimes H$, denoted by $\rho(a) = a\0 \otimes a\1$ that is an algebra homomorphism and $(A, \rho)$ is a right $H$-comodule \cite{BW, CMZ, M}.  Moreover, let
$(C, \cop_C, \eps_C)$ be a right $H$-module coalgebra, i.e., a coalgebra in the tensor category
$\mathcal{M}_H$ introduced in more detail in Section~\ref{three}.  

\begin{example}
\begin{rm}
The Hopf algebra $H$ is right $H$-comodule algebra over itself, where $\rho = \cop$.  Given a Hopf subalgebra
$R \subseteq H$ the quotient module $Q$ defined in Section~1 as $Q = H/ R^+H$ is a right $H$-module
coalgebra. 

Note that $(H, \cop, \eps)$
is also a right $H$-module coalgebra.  The canonical epimorphism $H \rightarrow Q$ denoted by $h \mapsto \overline{h}$ is an epi of right $H$-module coalgebras, and module $Q_H$ has $\overline{1_H}$ as a cyclic generator.

Of course, if $H = k$ is the trivial one-dimensional Hopf algebra, $A$ may be any $k$-algebra and $C$
any $k$-coalgebra.  
\end{rm}
\end{example}   

The mapping $\psi: C \otimes A \rightarrow A \otimes C$ defined by 
$\psi(c \otimes a) = a\0 \otimes c a\1$ is an entwining as the reader may easily check (the so-called Doi-Koppinen entwining
\cite[33.4]{BW}, \cite[2.1]{CMZ}, which includes the case considered in Section~\ref{three}).

From Proposition~\ref{prop-1-1} it follows that $A \otimes C$ has $A$-coring structure
\begin{equation}
\label{eq: comodulealgdiagonal}
a(a' \otimes c)a'' = aa' {a''}\0 \otimes c {a''}\1
\end{equation}
which defines the bimodule ${}_A(A \otimes C)_A$.  The coproduct is given by $\id_A \otimes \cop_C$
and the counit by $\id_A \otimes \eps_C$.   

Suppose in this paragraph that $H$ is a finite-dimensional Hopf algebra over an algebraically closed field, in which case $\mathcal{M}_H$ 
is a finite tensor category \cite{EO}. 
Notice in the equation above that the right $A$-module is given by a version of the diagonal action
in which the category $\mathcal{M}_A$ is a module category over $\mathcal{M}_H$ \cite{AM, EO}.
 
\begin{prop}
\label{prop-comodalgentwinedepth}
The depth of the $A$-coring $A \otimes C$ and the depth of the $H$-module $C$ are related by
$d(A \otimes C, A) \leq  2d(C, \mathcal{M}_H) + 1$.  
\end{prop}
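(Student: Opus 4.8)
The plan is to run the proof of Proposition~\ref{prop-entwinedepth} (the case $A = H$) while keeping only the inequality-direction that survives for a general $H$-comodule algebra $A$, with the coaction $\rho$ of $A$ taking over the structural role played there by the coproduct of $H$.

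First I would establish, for each $n \geq 1$, an $A$-bimodule isomorphism
\[
(A \otimes C)^{\otimes_A n} \cong A \otimes C^{\otimes n}
\]
exactly parallel to Eq.~(\ref{eq: iso}). Starting from the coring bimodule structure of Eq.~(\ref{eq: comodulealgdiagonal}), one shows by induction on $n$ that every element of $(A \otimes C)^{\otimes_A n}$ is a sum of terms $(a \otimes c_1)\otimes_A (1_A \otimes c_2) \otimes_A \cdots \otimes_A (1_A \otimes c_n)$: one repeatedly writes the left entry of a factor as $a_i(1_A \otimes c_i)$ and pushes $a_i$ across the balanced tensor product onto the preceding factor via the right action $(a \otimes c)\cdot a' = a {a'}\0 \otimes c{a'}\1$. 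The inverse map sends $a \otimes c_1 \otimes \cdots \otimes c_n$ to $(a \otimes c_1)\otimes_A(1_A \otimes c_2)\otimes_A \cdots \otimes_A(1_A \otimes c_n)$. The right $A$-action transported to $A \otimes C^{\otimes n}$ is the twisted diagonal action
\[
(a \otimes c_1 \otimes \cdots \otimes c_n)\cdot a' = a{a'}\0 \otimes c_1{a'}\1 \otimes \cdots \otimes c_n {a'}_{(n)},
\]
where $\rho^{(n)}(a') = {a'}\0 \otimes {a'}\1 \otimes \cdots \otimes {a'}_{(n)}$ denotes the iterated coaction. That this action is well-defined over $A$ uses precisely that $\rho$ is a unital algebra homomorphism, so that $\rho(ab) = \rho(a)\rho(b)$ yields associativity together with coassociativity of the comodule; this is the substitute for the multiplicativity of $\cop$ used when $A = H$.

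Next I would realize the right-hand side functorially. Define $F\colon \mathcal{M}_H \to {}_A\mathcal{M}_A$ by $F(W) = A \otimes W$, with left $A$-action by multiplication on the first tensorand and right $A$-action $(a \otimes w)\cdot a' = a{a'}\0 \otimes w{a'}\1$ combining the coaction with the $H$-module structure of $W$. A routine check (using that $\rho$ is a unital algebra map and that $W$ is a right $H$-module) shows that $F(W)$ is an $A$-bimodule and that $F$ is an additive functor; by coassociativity it sends $C^{\otimes n}$ exactly to the bimodule appearing in the isomorphism above. Since additive functors preserve finite direct sums and direct summands, $F$ preserves the similarity relation $\sim$.

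Finally, suppose $d(C, \mathcal{M}_H) = n$, so that $C^{\otimes n} \sim C^{\otimes (n+1)}$ in $\mathcal{M}_H$. Applying $F$ gives $A \otimes C^{\otimes n} \sim A \otimes C^{\otimes (n+1)}$ as $A$-bimodules, and transporting along the isomorphism of the first step yields $(A \otimes C)^{\otimes_A n} \sim (A \otimes C)^{\otimes_A (n+1)}$; hence $d(A \otimes C, A) \leq 2n+1 = 2d(C, \mathcal{M}_H) + 1$. The main obstacle — and the reason one obtains only an inequality here, in contrast to the equality of Proposition~\ref{prop-entwinedepth} — lies in the converse direction: for $A = H$ one recovers $C^{\otimes n} \sim C^{\otimes(n+1)}$ by applying $k \otimes_H -$ to the bimodule similarity, but for a general comodule algebra $A$ there is no functor that faithfully detects the $H$-module structure of $C^{\otimes n}$ inside $A \otimes C^{\otimes n}$, so the reverse inequality is unavailable. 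The only genuinely delicate point is the index bookkeeping in the first step, where the algebra-homomorphism property of $\rho$ is exactly what makes the transported action the twisted diagonal one.
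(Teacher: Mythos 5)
Your proposal is correct and takes essentially the same route as the paper's proof: the same $A$-bimodule isomorphism $(A \otimes C)^{\otimes_A n} \cong A \otimes C^{\otimes n}$ with the (twisted) diagonal right $A$-action coming from Eq.~(\ref{eq: comodulealgdiagonal}), followed by applying the additive functor $W \mapsto A \otimes W$ to the similarity $C^{\otimes n} \sim C^{\otimes (n+1)}$ and transporting back. The paper merely says ``applying an additive functor'' where you spell out $F$ and verify its bimodule well-definedness, and your closing remark on why the converse direction (available when $A = H$ via $k \otimes_H -$) fails in general is a sound observation, though not needed for the stated inequality.
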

\begin{proof}
The $n$-fold tensor product of $A \otimes C$ over $A$ reduces  to the $A$-bimodule isomorphism 
\begin{equation}
\label{eq: iso2}
(A \otimes C)^{\otimes_A n} \cong A \otimes C^{\otimes n}
 \end{equation}
via the mapping $$a \otimes c_1 \otimes \cdots \otimes c_n \mapsto a \otimes c_1 \otimes_A 1_A \otimes c_2 \otimes_A \cdots \otimes_A 1_A \otimes c_n,$$
where $A \otimes C^{\otimes n}$ has right $A$-module structure from the diagonal action by $A$:
$(a \otimes c_1 \otimes \cdots \otimes c_n)b = ab\0 \otimes c_1 b\1 \otimes \cdots \otimes c_n b_{(n)}$.  This follows  from Eq.~(\ref{eq: comodulealgdiagonal}), cancellations of the type $M \otimes_A A \cong M$ for modules $M_A$, and an induction on $n$.  

Suppose $d(C, \mathcal{M}_H) = n$, so that $C^{\otimes n} \sim C^{\otimes (n+1)}$ as right
$H$-modules (in the finite tensor category $\mathcal{M}_H$).  Applying an additive functor,
it follows that $A \otimes C^{\otimes n} \sim A \otimes C^{\otimes (n+1)}$ as $A$-bimodules.
Then applying the isomorphism~(\ref{eq: iso2}) obtain $d(A \otimes C, A) \leq 2d(C, \mathcal{M}_H) + 1$.
\end{proof}

Suppose $C$ has grouplike $g'$.  Then $g = 1_A \otimes g'$ is a grouplike
element in $A \otimes C$.  Then we have from Eq.~(\ref{eq: coringdepth=hdepth}) and Proposition~\ref{prop-comodalgentwinedepth} the proof of the following. 
\begin{lemma}
\label{lem-ineq}
Suppose $A \otimes C := \mathcal{C}$ is a Galois coring with $B = A_g^{\mbox{co}\, \mathcal{C}}$
(or just that $B$ is a subalgebra of $A$ such that $A \otimes_B A \cong \mathcal{C}$ as
natural and right-diagonal $A$-bimodules, respectively).  
 Then the h-depth of the subalgebra $B \subseteq A$ satisfies the inequality, $d_h(B,A) \leq 2d(C, \mathcal{M}_H) +1$.  
\end{lemma}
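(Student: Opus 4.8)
Lemma ��ref{lem-ineq} is essentially a two-line corollary of the machinery already assembled, so my plan is simply to name the right equalities and chain them together. The plan is to establish the display $d_h(B,A) \le 2d(C,\mathcal{M}_H)+1$ by interposing the coring depth $d(A\otimes C,A)$ as an intermediate quantity, using the Galois hypothesis on the left and Proposition~\ref{prop-comodalgentwinedepth} on the right.

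First I would use the grouplike element. By the hypothesis, $g=1_A\otimes g'$ is grouplike in $\mathcal{C}=A\otimes C$ (as noted just before the statement), and $B$ is either the coinvariant subalgebra $A_g^{\mathrm{co}\,\mathcal{C}}$ or, more generally, a subalgebra with $A\otimes_B A \cong \mathcal{C}$ as the appropriate $A$-bimodules. The Galois condition says precisely that this comparison map $a\otimes_B a'\mapsto aga'$ is a coring isomorphism onto the Sweedler coring of $B\subseteq A$. Hence $\mathcal{C}\cong A\otimes_B A$ as $A$-corings, and in particular their tensor powers agree as natural $A$-bimodules, so the two corings have equal depth: $d(\mathcal{C},A)=d(A\otimes_B A,A)$.

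Next I would invoke Eq.~(\ref{eq: coringdepth=hdepth}), which identifies the depth of the Sweedler coring with the h-depth of the extension, giving $d(A\otimes_B A,A)=d_h(B,A)$. Combining with the previous step yields $d_h(B,A)=d(A\otimes C,A)$. Finally, Proposition~\ref{prop-comodalgentwinedepth} supplies the inequality $d(A\otimes C,A)\le 2d(C,\mathcal{M}_H)+1$, and substituting produces the claimed bound. Thus the whole argument is a three-term chain: an equality from the Galois isomorphism, an equality from Eq.~(\ref{eq: coringdepth=hdepth}), and the inequality from Proposition~\ref{prop-comodalgentwinedepth}.

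The only point requiring genuine care---and the one I would flag as the main obstacle---is verifying that the Galois isomorphism $A\otimes_B A\cong A\otimes C$ is compatible enough to transport \emph{coring} depth, not merely bimodule depth; that is, one must check the isomorphism respects the tensor-power structure $\mathcal{C}^{\otimes_A n}$ used in the definition of coring depth. Since a coring isomorphism automatically induces isomorphisms $\mathcal{C}^{\otimes_A n}\cong (A\otimes_B A)^{\otimes_A n}$ of $A$-bimodules, and the coring-depth definition compares exactly these tensor powers as $A$-bimodules, this compatibility is immediate once one records that the Galois map is a morphism of $A$-corings. The parenthetical weakening in the statement (``or just that $B$ is a subalgebra\ldots as natural and right-diagonal $A$-bimodules'') is precisely what guarantees the tensor powers match even if one does not know $B$ is the full coinvariant subalgebra, so no faithful flatness or further input is needed here.
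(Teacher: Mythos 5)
Your proposal is correct and is essentially the paper's own argument: the paper proves Lemma~\ref{lem-ineq} exactly by chaining the Galois isomorphism $A \otimes_B A \cong A \otimes C$ (which transports depth since coring depth is defined purely by $A$-bimodule similarity of the tensor powers $\mathcal{C}^{\otimes_A n}$), Eq.~(\ref{eq: coringdepth=hdepth}), and Proposition~\ref{prop-comodalgentwinedepth}. Your closing observation that only the bimodule isomorphism (not a coring morphism, nor faithful flatness) is needed is also the right resolution of the one point requiring care.
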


\subsection{Crossed products as comodule algebras}
Now let $A$ be an associative crossed product $D \#_{\sigma} H$ for some Hopf algebra algebra $H$ and twisted $H$-module algebra $D$ with $2$-cocycle $\sigma: D \otimes D \rightarrow H$ that
is convolution-invertible \cite[chapter 7]{M}.  Then $A$ is a right $H$-comodule algebra via
$\rho = \id_D \otimes \cop$, which is quite obvious from the formula for multiplication in   
$D \#_{\sigma} H$: ($h,x,y \in H, d,d' \in D$) 
\begin{equation}
\label{eq: crossprod}
(d \# h)(d' \# x) = d(h\1 \cdot d')\sigma(h\2, x\1) \# h\3 x\2
\end{equation}
Additionally, the formulas for $D$ a twisted right $H$-module and $\sigma$ a $2$-cocycle are useful:
\begin{equation}
\label{eq: twistedmodule}
h \cdot (x \cdot d) = \sigma(h\1, x\1) (h\2 x\2 \cdot d) \sigma^{-1}(h\3, x\3)
\end{equation}
\begin{equation}
\label{eq: twococycle}
(h\1 \cdot \sigma(x\1, y\1)) \sigma( h\2, x\2 y\2) = \sigma(h\1, x\1) \sigma( h\2 x\2, y)
\end{equation}

\begin{example}
\begin{rm}
For $H = kG$ a group algebra, one obtains from this the familiar conditions of the group crossed product, $A = D \# kG$, 
where for $g,h, s \in G$ and $d, d' \in D$, 
\begin{eqnarray}
g \cdot (h \cdot d) & =  & \sigma(g, h) (gh \cdot d)\sigma(g,h)^{-1} \label{action} \\
(g \cdot \sigma(h,s)) \sigma(g, hs) & =  &  \sigma(g,h) \sigma(gh, s) \label{cocycle}\\
(d \# g) (d' \# h) & = & d(g \cdot d') \sigma(g,h) \# gh 
\end{eqnarray} 
Note that $\sigma^{-1}(g,h)$ is interchangeable with $\sigma(g,h)^{-1}$.
\end{rm}
\end{example}
\begin{example}
\begin{rm}
If $\sigma$ is trivial, $\sigma = 1_D$, then the crossed product reduces to the skew group algebra
$D \ast G$, where $D$ is a $G$-module and multiplication is given by $(d \# g)(d' \# h) = d (g \cdot d') \# gh$; this is a well-known setup  in Galois theory of fields and commutative algebras.     More generally,
$D \#_{\sigma} H$ is clearly isomorphic  to the smash product $D \# H$ if $\sigma(x,y) = \eps(x)\eps(y)1_H$:
see Eq.~(\ref{eq: crossprod}).  Then $D$ is a left $H$-module algebra. (If the action is Galois,
then the endomorphism algebra of $D$ over its invariant subalgebra is isomorphic to the smash
product $D \# H$ \cite[8.3.3]{M}.)
\end{rm}
\end{example}

\begin{example}
\begin{rm}
If $\sigma$ is instead nontrivial, but the action of $G$ on $D$ is trivial, i.e. $g \cdot d = d$
for each $g \in G$ and $d \in D$, then $D \#_{\sigma} kG = D_{\sigma}[G]$, the twisted group
algebra \cite{P}, with multiplication given by $(d \# g)(d' \# h) = dd' \sigma(g,h) \# gh$.  For example,
the real quaternions are a twisted group algebra of $\R$ with $G \cong \Z_2 \oplus \Z_2$ and
$\sigma = \pm 1$.  
\end{rm}
\end{example}

\begin{example}
\label{example-groupascrossedproduct}
\begin{rm}
For any group $G$ and normal subgroup $N$ of $G$, the group algebra is a crossed product of the quotient group algebra acting on
the subgroup $N$ as follows \cite[7.1.6]{M}. Let $Q$ denote the group $G/N$.  For each coset $q \in Q$, let $\gamma(q)$ denote a coset representative, choosing $\gamma(\overline{1_G}) = 1_G$.  It
is an exercise then to show that with $\sigma(\overline{x},\overline{y}) := \gamma(\overline{x}) \gamma(\overline{y}) \gamma(\overline{x}\overline{y})^{-1} \in N$ and action
of $Q$ on $kN$ given by $\overline{x}\cdot n = \gamma(\overline{x})n \gamma(\overline{x})^{-1}$, one has
$kG = kN \#_{\sigma} kQ$.  
\end{rm}
\end{example}
 
Let $R \subseteq H$ be a Hopf subalgebra pair.  Again form the quotient module $Q = H / R^+H$,
which is a right $H$-module coalgebra with canonical epi of right $H$-module coalgebras
$\pi: H \rightarrow Q$, $h \mapsto \overline{h}$.  Then the crossed product 
$A = D \#_{\sigma} H$, which is a right $H$-comodule algebra, is also a right $Q$-comodule
via $A \rightarrow A \otimes H \stackrel{A \otimes \pi}{\longrightarrow} A \otimes Q$, the composition $\rho$ being the coaction given by $\rho(d \# h) = d \# h\1 \otimes \overline{h\2}$.
Then one sees that $B : = D \#_{\sigma} R \subseteq A^{{\mbox co}\, Q} $, the $Q$-coinvariants defined as the subalgebra $\{ b \in A \| \, \forall a \in A, \rho(ba) = b\rho(a) \}$; the inclusion follows from noting $\overline{rh} = \eps(r) \overline{h}$ for all $r \in R, h \in H$.   

Note  that $Q$ has the grouplike element $\overline{1_H}$, so that $g = 1_A \otimes \overline{1_H}$
is a grouplike element in the $A$-coring $\mathcal{C} = A \otimes Q$. From the previous observation and
Eq.~(\ref{eq: comodulealgdiagonal}), it follows that $B = D \# R \subseteq A_g^{\mbox{co}\, \mathcal{C}}$.  

Suppose
that $A \otimes_B A \cong A \otimes Q$ as $A$-bimodules, or more strongly $\mathcal{C}$
is a Galois $A$-coring with respect to $B = A_g^{\mbox{co}\, \mathcal{C}}$.  Then putting together Lemma~\ref{lem-ineq} with Corollary~\ref{cor-cue} and Eq.~(\ref{eq: cue}),  we arrive at the inequality of minimum
h-depths, 
\begin{equation}
\label{eq: mainresult}
d_h(D \#_{\sigma} R, D \#_{\sigma} H) \leq d_h(R, H),
\end{equation}
the main aim of this paper:  we next set about establishing this supposition for finite rank group algebra extensions and certain crossed product Hopf algebra extensions.  
\begin{prop}
\label{prop-sch}
Consider  the $H$-comodule algebra $A = D \#_{\sigma} H$.  Let $B = D \#_{\sigma} R$ be
an $R$-comodule subalgebra of $A$, where $R$ is a Hopf subalgebra of $H$.  Suppose  the condition~(\ref{eq: cond}) below is satisfied. Then $A \otimes_B A \cong A \otimes Q$ as $A$-bimodules,
where $Q = H/ R^+H$.  
 If $A_B$ is faithfully flat,  then $A$ is a coalgebra $Q$-Galois extension of $B$, i.e., 
$A \otimes Q$ is a Galois $A$-coring with $B$ the invariant subalgebra.  
\end{prop}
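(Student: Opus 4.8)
The plan is to exhibit an explicit inverse to the Galois map and then deduce the coinvariant statement from the faithfully flat machinery of the preceding subsection. First I would record that the relevant map is $\beta\colon A\otimes_B A\to A\otimes Q$, $\beta(x\otimes_B y) = x y\0\otimes\overline{y\1}$, i.e.\ $\beta(x\otimes_B(d'\# h')) = x(d'\# h'\1)\otimes\overline{h'\2}$. Since $\rho$ is an algebra map, a one-line check shows $\beta$ is left $A$-linear and right $A$-linear for the diagonal right action of Eq.~(\ref{eq: comodulealgdiagonal}), so it is a morphism of the two $A$-bimodules in the statement; this is the same map whose bijectivity defines a Galois coring, by Proposition~\ref{prop-1-1}.

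The heart of the argument is to build the inverse using the cleft structure of the crossed product. Recall that $A = D\#_\sigma H$ is a cleft $H$-comodule algebra with cleaving map $j\colon H\to A$, $j(h) = 1\# h$, which is convolution-invertible with inverse $j^{-1}$ \cite[Ch.\ 7]{M}; moreover $(1\# a)(1\# b) = \sigma(a\1,b\1)\# a\2 b\2$, so $j(a)j(b) = (\sigma(a\1,b\1)\#1)\,j(a\2 b\2)$. Mimicking the inverse of Eq.~(\ref{eq: map}) (which is the case $D = k$, $\sigma$ trivial, $j = \id$, $j^{-1} = S$), I would set
$$\gamma\colon A\otimes Q\to A\otimes_B A,\qquad \gamma(x\otimes\overline{z}) = x\,j^{-1}(z\1)\otimes_B j(z\2).$$
Granting that $\gamma$ is well defined, the two composites are forced to be the identity: the convolution identities $\sum j^{-1}(z\1)j(z\2) = \eps(z)1_A = \sum j(z\1)j^{-1}(z\2)$ give $\beta\gamma = \id$ directly, while for $\gamma\beta$ one writes $d'\# h'\1 = (d'\#1)\,j(h'\1)$, applies $j\ast j^{-1} = \eps1_A$, and moves the factor $d'\#1\in B$ back across $\otimes_B$, recovering $x\otimes_B(d'\# h')$.

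Thus the only real issue --- and the main obstacle --- is the \textbf{well-definedness of $\gamma$}, namely that $z\mapsto \sum j^{-1}(z\1)\otimes_B j(z\2)$ factors through the quotient $H\to Q = H/R^+H$; this is exactly the point at which hypothesis~(\ref{eq: cond}) is needed. The key structural facts are that $S(R) = R$ and $\cop(R)\subseteq R\otimes R$ force $j(R)\subseteq B$ and $j^{-1}(R)\subseteq B$, so that for $z = r h$ with $r\in R^+$ I can use the cocycle identity for $j$ to split $j(r\2 h\2)$ and $j^{-1}(r\1 h\1)$ into an $R$-part and an $H$-part, absorb the $R$-part (an element of $B$) across the balanced tensor $\otimes_B$, and collect it against the remaining factor; condition~(\ref{eq: cond}) is precisely what guarantees that this absorption is legitimate and produces an overall scalar factor $\eps(r) = 0$. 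I expect the entire difficulty to sit in this bookkeeping with the $2$-cocycle $\sigma$ and the twisted action, organised by Eqs.~(\ref{eq: crossprod}) and~(\ref{eq: twococycle}); once $\gamma$ descends to $Q$, the proposition's first assertion $A\otimes_B A\cong A\otimes Q$ follows.

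Finally, for the second assertion I would invoke the faithfully flat interlude. Having shown before the proposition that $B = D\#_\sigma R\subseteq A^{\mbox{co}\,Q}$ and now that $\beta\colon A\otimes_B A\to A\otimes Q$ is an isomorphism, I apply the result preceding Lemma~\ref{lemma-ff} with $B$ in the role of the subalgebra ``$D$'' there: since $\beta(1_A\otimes_B b) = b\otimes\overline{1_H} = \beta(b\otimes_B 1_A)$ for $b\in A^{\mbox{co}\,Q}$ and $A_B$ is faithfully flat, Lemma~\ref{lemma-ff} yields $A^{\mbox{co}\,Q} = B$. Hence $B$ is exactly the coinvariant subalgebra and $A\otimes Q$ is a Galois $A$-coring with invariant subalgebra $B$, i.e.\ $A$ is a coalgebra $Q$-Galois extension of $B$, as claimed.
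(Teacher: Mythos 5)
Your proposal is correct and takes essentially the same route as the paper: the inverse you build from the convolution-inverse $j^{-1}(h)=\sigma^{-1}(S(h_{(2)}),h_{(3)})\,\#\,S(h_{(1)})$ of the cleaving map $h\mapsto 1_D\#h$ is precisely the paper's explicit $\beta^{-1}$ (both citing Montgomery), and condition~(\ref{eq: cond}) plays the identical role of ensuring that this inverse descends along $H\rightarrow Q$. The concluding faithfully-flat step via Lemma~\ref{lemma-ff} is likewise the paper's intended argument: the paper cites Schneider and Brzezi\'nski--Wisbauer at that point, but its own ``faithfully flat interlude'' is exactly what you invoke.
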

\begin{proof}
We investigate if the canonical mapping $$\beta(a \otimes_B a') = aga' = a(d \# h\1) \otimes \overline{h\2}$$
(where $a' = d \# h \in D \# H = A$) is bijective, and the module $A_B$ is  faithfully flat.  

The map $\beta: A \otimes_B A \stackrel{\cong}{\longrightarrow} A \otimes Q$ is well-defined: 
suppose $d' \in D, r \in R$, we  check that $$\beta(a(d' \# r) \otimes_B  (d \# h)) = \beta(a \otimes_B (d'(r\1 \cdot d)\sigma(r\2,h\1) \# r\3 h\2))$$
i.e., $ a(d' \# r)(d \# h\1) \otimes \overline{h\2} = ad'(r\1 \cdot d)\sigma(r\2,h\1) \# r\3 h\2 \otimes \overline{r\4 h\3}$, which is clear since $\overline{rh} = \eps(r) \overline{h}$ for each $r \in R, h \in H$.  

We note that $\beta^{-1}: A \otimes Q \rightarrow A \otimes_B A$ is given by
$$\beta^{-1}(a \otimes \overline{h}) = a(\sigma^{-1}(S(h\2), h\3) \# S(h\1)) \otimes_B (1_D \# h\4).$$
The computation $\beta^{-1} \circ \beta = \id_{A \otimes_B A}$ and the computation
$\beta \circ \beta^{-1} = \id_{A \otimes Q}$ follow from $\gamma: H \rightarrow A$,
defined by $\gamma(h) = 1_D \# h$, having convolution-inverse
$\mu: H \rightarrow A$, defined by $$\mu(h) = \sigma^{-1}(S(h\2),h\3) \# S(h\1),$$ 
on \cite[p.\ 109]{M}.  We are left with verifying that $\beta^{-1}$ is well-defined, i.e., vanishes when $h = rh'$
for some $r \in R^+, h' \in H$.  Dropping the prime, this becomes the condition
\begin{equation}
\label{eq: cond}
\sigma^{-1}(S(h\2) S(r\2), r\3 h\3) \# S(h\1) S(r\1) \otimes_B (1_D \# r\4 h\4) = 0
\end{equation}
for all $r \in R, h \in H$ such that $\eps(r) = 0$. 

Finally, suppose either natural module, $A_B$ or ${}_BA$ is faithfully flat.  Then $A$ is a $Q$-Galois extension of $B$ (cf.\ \cite[34.10]{BW}, \cite[1.2, 3.1]{Sch}).
\end{proof}
\begin{cor}
Let $R \subseteq H$ be a finite-dimensional Hopf subalgebra pair, and $A$ a left $H$-module algebra.  Then
h-depth satisfies 

\begin{equation}
\label{eq: smashproddepth}
d_h(A \# R, A \# H) \leq d_h(R,H).
\end{equation}  
\end{cor}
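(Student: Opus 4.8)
The plan is to specialize Proposition~\ref{prop-sch} to the smash-product case, where the cocycle $\sigma$ is trivial ($\sigma(x,y) = \eps(x)\eps(y)1_H$), so that $A = D \# H$ is the smash product and $B = D \# R$. First I would verify that condition~(\ref{eq: cond}) is automatically satisfied in this setting. With $\sigma$ trivial, its convolution inverse $\sigma^{-1}$ is likewise trivial, so the factor $\sigma^{-1}(S(h\2)S(r\2), r\3 h\3)$ collapses to a scalar $\eps$-evaluation, and the left side of~(\ref{eq: cond}) simplifies to an expression of the form $\big(1_D \# S(h\1)S(r\1)\big) \otimes_B (1_D \# r\2 h\2)$ summed appropriately. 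The key point is that the tensor is balanced over $B = D \# R$, and since $r$ runs through $R$, the element $1_D \# r$ (or the relevant Sweedler components of $r$) may be absorbed across the tensor bar into $B$; when $\eps(r) = 0$ this absorption forces the whole expression to vanish by an antipode-counit cancellation. So the main verification is that the $R^+$-condition is subsumed by the balancedness over $D \# R$.

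Next I would invoke the faithful flatness hypothesis. Since $R \subseteq H$ is a finite-dimensional Hopf subalgebra, Skryabin's Freeness Theorem (as used after Eq.~(\ref{eq: map})) gives that $H_R$ is free, hence faithfully flat. I would then argue that $A_B = (D \# H)_{D \# R}$ inherits faithful flatness from $H_R$: intuitively $A \cong D \otimes H$ as a right module over $B \cong D \otimes R$ with the module structure built from the $H$-module structure on the $H$-tensor-factor, so free-ness (or at least faithful flatness) of $H$ over $R$ lifts to $A$ over $B$. With condition~(\ref{eq: cond}) verified and faithful flatness in hand, Proposition~\ref{prop-sch} yields that $A \otimes Q$ is a Galois $A$-coring with invariant subalgebra $B$, i.e. $A \otimes_B A \cong A \otimes Q$ as right-diagonal $A$-bimodules.

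Finally I would assemble the depth inequality. Having established that $\mathcal{C} = A \otimes Q$ is a Galois $A$-coring with $B = D \# R$ as invariant subalgebra, Lemma~\ref{lem-ineq} gives $d_h(B, A) \leq 2d(Q, \mathcal{M}_H) + 1$. Then Corollary~\ref{cor-cue} (Eq.~(\ref{eq: cue})), applied to the Hopf subalgebra $R \subseteq H$ for which $Q = H/R^+H$ is precisely the associated quotient module coalgebra, gives $2d(Q, \mathcal{M}_H) + 1 = d_h(R, H)$. Chaining these yields exactly $d_h(A \# R, A \# H) \leq d_h(R, H)$, which is Eq.~(\ref{eq: smashproddepth}), the special case of the paper's main inequality~(\ref{eq: mainresult}) with trivial cocycle.

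I expect the main obstacle to be the verification of condition~(\ref{eq: cond}): one must carefully track the Sweedler components of $r \in R^+$ through the antipode terms and show that balancedness over $B = D \# R$ genuinely absorbs the $R$-part so that the $\eps(r) = 0$ hypothesis kills the expression. The faithful-flatness transfer from $H_R$ to $A_B$ is conceptually routine but requires noting that the smash-product module structure respects the tensor decomposition $A \cong D \otimes H$; I would keep that step brief and defer the bookkeeping, since the essential content is the coring-Galois and depth machinery already built in Proposition~\ref{prop-sch}, Lemma~\ref{lem-ineq}, and Corollary~\ref{cor-cue}.
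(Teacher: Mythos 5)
Your proposal is correct and follows essentially the same route as the paper: specialize Proposition~\ref{prop-sch} to trivial $\sigma$, verify condition~(\ref{eq: cond}) by absorbing $1_D \# S(r\1)$ across the $B$-balanced tensor so that the factor $S(r\1)r\2 = \eps(r)1_H$ kills the expression for $r \in R^+$, then establish freeness (hence faithful flatness) of the smash product over $B = D \# R$ and conclude via Lemma~\ref{lem-ineq} and Corollary~\ref{cor-cue}, i.e.\ inequality~(\ref{eq: mainresult}). The only cosmetic difference is that the paper invokes Nichols--Zoeller freeness of ${}_RH$ (hence ${}_B(A\# H)$ free) where you cite Skryabin's theorem for $H_R$; both suffice, since Proposition~\ref{prop-sch} accepts faithful flatness on either side.
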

\begin{proof}
If $\sigma(x,y) = \eps(x)\eps(y)1_D$ in Proposition~\ref{prop-sch}, then  the crossed products in the theorem are smash products. The corollary follows if Eq.~(\ref{eq: cond}) is satisfied with this choice of  $\sigma$.  But  the left-hand side
reduces  to $1 \# S(h\1) \otimes_B 1 \# S(r\1) r\2 h\2 $, indeed equal to zero for $r \in R^+$.
Finally, the  natural module ${}_RH$ is free by Nichols-Zoeller, so that $A \# H$ is easily shown to be  free as a natural left module
over $A \# R$.  
\end{proof}
\begin{prop}
Suppose $G$ is a group and $S$ is a subgroup of $G$ such that $|G:S| < \infty$.  Let $H = kG$, $R = kS$ and $A$,$B$
be crossed products of the group algebras $H$ and $R$ with left twisted $H$-module algebra $D$ as above.  Then $A$ is a $Q$-Galois extension of $B$ (where $Q$ is the permutation module of right
cosets of $S$ in $G$). 
\end{prop}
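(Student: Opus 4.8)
\emph{The plan.} The statement is tailored to invoke Proposition~\ref{prop-sch} with $H = kG$ and $R = kS$, so it remains only to (i) verify the well-definedness condition~(\ref{eq: cond}) in the group-algebra case and (ii) exhibit a faithfully flat natural module $_BA$ or $A_B$. First I would record the identification of $Q = H/R^+H$ with the permutation module $k[S\backslash G]$: since $R^+H = (kS)^+kG$ is spanned by $\{\, sg - g : s\in S,\ g\in G\,\}$, the class $\overline{g}$ depends only on the right coset $Sg$, so $Q$ has $k$-basis $\{\overline{g_i}\}$ indexed by coset representatives and $\dim_k Q = |G:S| < \infty$. Throughout I write $\gamma(g) = 1_D \# g$ and, as in Proposition~\ref{prop-sch}, $\mu(g) = \sigma^{-1}(g^{-1},g)\# g^{-1}$ for its convolution inverse; because $\sigma$ is convolution-invertible and the group elements are grouplike, each $\gamma(g)$ is an \emph{honest} unit of $A$ with inverse $\mu(g)$, and in particular every $\sigma(g,h)$ is invertible in $D$.

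\emph{Faithful flatness.} For (ii) I would fix right coset representatives $g_1,\dots,g_n$ with $G = \bigsqcup_i Sg_i$ and show that $A = \bigoplus_i B\,\gamma(g_i)$ is free of rank $n = |G:S|$ as a left $B$-module. Spanning follows from the multiplication rule~(\ref{eq: crossprod}): any basis element $d \# sg_i$ equals $(d\#1_G)\,\gamma(sg_i)$, and $\gamma(sg_i)=\beta_s\gamma(g_i)$ with $\beta_s:=\sigma(s,g_i)^{-1}\#s\in B$ (see below), so $d\#sg_i\in B\,\gamma(g_i)$. Directness follows by comparing the $G$-grading $A = \bigoplus_{g\in G} D\# g$: a relation $\sum_i b_i\gamma(g_i)=0$ with $b_i=\sum_{s}d_{i,s}\#s$ expands to $\sum_{i,s} d_{i,s}\sigma(s,g_i)\# sg_i = 0$, and since the elements $sg_i$ are pairwise distinct in $G$ and each $\sigma(s,g_i)$ is invertible, every $d_{i,s}=0$. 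Hence $_BA$ is free, so faithfully flat, and Lemma~\ref{lemma-ff} then certifies that $B$ is precisely the coinvariant subalgebra.

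\emph{The crux: condition~(\ref{eq: cond}).} Since $R^+ = \mathrm{span}_k\{s-1 : s\in S\}$ and the left-hand side of~(\ref{eq: cond}) is linear in $r$, it suffices to evaluate at $r=s$ and $r=1$ and subtract. Substituting the grouplike elements $r=s$, $h=g$ collapses all Sweedler indices and returns exactly $\mu(sg)\otimes_B\gamma(sg)$, while $r=1$ returns $\mu(g)\otimes_B\gamma(g)$; thus~(\ref{eq: cond}) reduces to the single identity $\mu(sg)\otimes_B\gamma(sg) = \mu(g)\otimes_B\gamma(g)$ in $A\otimes_B A$. To prove it I would exploit the cleft structure: a one-line computation with~(\ref{eq: crossprod}) gives $\gamma(sg)=\beta_s\,\gamma(g)$, where $\beta_s=\sigma(s,g)^{-1}\#s=(\sigma(s,g)^{-1}\#1_G)(1_D\#s)$ is a product of the evident units $\sigma(s,g)^{-1}\#1_G$ and $1_D\#s$ of $B$, hence $\beta_s\in B^{\times}$. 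Taking inverses in $A$ yields $\mu(sg)=\mu(g)\beta_s^{-1}$, and moving the unit $\beta_s\in B^{\times}$ across the balanced tensor gives $\mu(g)\beta_s^{-1}\otimes_B\beta_s\gamma(g)=\mu(g)\otimes_B\gamma(g)$, as required.

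With~(\ref{eq: cond}) and faithful flatness established, Proposition~\ref{prop-sch} delivers $A\otimes_B A\cong A\otimes Q$ and the conclusion that $A$ is a $Q$-Galois extension of $B$. I expect the main obstacle to be~(\ref{eq: cond}): attacked head-on it dissolves into an unpleasant tangle of cocycle and twisted-module identities~(\ref{action})--(\ref{cocycle}), and the real content is the observation that it collapses to the single relation $\gamma(sg)\equiv\gamma(g)\ \mathrm{mod}\ B^{\times}$, i.e. to the cleftness of the crossed product restricted along $kS\subseteq kG$.
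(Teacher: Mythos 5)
Your proof is correct, and its skeleton necessarily matches the paper's: both verify the two hypotheses of Proposition~\ref{prop-sch} (condition~(\ref{eq: cond}) and faithful flatness of a natural module) and identify $Q = H/R^+H$ with $k[S\backslash G]$, and your freeness argument for ${}_BA$ (right coset representatives together with invertibility of the values $\sigma(s,g_i)$ in $D$) is essentially the paper's verbatim. The genuine difference lies in how the crux, condition~(\ref{eq: cond}), is settled. Both proofs first use linearity in $r$ and $h$ to reduce it to the single identity $\mu(rh)\otimes_B\gamma(rh) = \mu(h)\otimes_B\gamma(h)$ for $r \in S$, $h \in G$, which is exactly the paper's Eq.~(\ref{eq: simp}) (here $\gamma(x) = 1_D\# x$ and $\mu$ is its convolution inverse, as in the proof of Proposition~\ref{prop-sch}). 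The paper then proves this identity by computation: it invokes the inverted $2$-cocycle identity~(\ref{eq: twococycleinverse}) with $x = h^{-1}$, $y = r^{-1}$, $z = rh$ to factor the left-hand tensorand $\sigma^{-1}(h^{-1}r^{-1},rh)\,\#\, h^{-1}r^{-1}$ as $\mu(h)$ times the element $\sigma^{-1}(r^{-1},rh)\,\#\, r^{-1}$ of $B$, and then slides that element across $\otimes_B$. You reach the same factorization without touching the cocycle identities, by observing that at grouplike arguments convolution inverses are honest inverses in $A$, so that $\gamma(rh) = \beta_r\gamma(h)$ with $\beta_r = \sigma(r,h)^{-1}\# r \in B^{\times}$ yields $\mu(rh) = \mu(h)\beta_r^{-1}$ upon inverting; the finishing move, sliding an element of $B$ across the balanced tensor, is the same in both arguments. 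What your route buys is conceptual clarity: it exhibits Eq.~(\ref{eq: simp}) as an instance of cleftness of $D\#_{\sigma}kG$ restricted along $kS \subseteq kG$, i.e.\ $\gamma(rh) \equiv \gamma(h) \bmod B^{\times}$, and it explains why the paper's manipulation succeeds --- the paper's cocycle computation is precisely what your units argument unwinds to, and it needs only that the $B$-factor lies in $B$ rather than that it is invertible.
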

\begin{proof}
It suffice to check Eq.~(\ref{eq: cond}) for $h \in G$ and $r \in S$.  Note that $1 - r \in R^+$
and such elements form a $k$-basis.  Also note that $$\cop^{n-1}(1-r) = 1 \otimes \cdots \otimes 1
- r \otimes \cdots \otimes r$$ ($n$ $1$'s and $n$ $r$'s on the right-hand side of the equation).  Eq.~(\ref{eq: cond}) becomes
\begin{eqnarray}
\label{eq: simp}
\sigma^{-1}(h^{-1}r^{-1}, rh) \# h^{-1} r^{-1} \otimes_B (1_D \# rh) && \nonumber \\ &= & \sigma^{-1}(h^{-1},h) \# h^{-1} \otimes_B (1_D \# h) 
\end{eqnarray}
The inverse of the $2$-cocycle Equation~(\ref{cocycle}) is ($\forall x,y,z \in G$)
\begin{equation}
\label{eq: twococycleinverse}
\sigma^{-1}(xy,z) \sigma^{-1}(x,y) = \sigma^{-1}(x,yz) [ x \cdot \sigma^{-1}(y,z)]
\end{equation}
Letting $x = h^{-1}, y = r^{-1}, z = rh$, the left-hand side of Eq.~(\ref{eq: simp}) becomes
$$ \sigma^{-1}(h^{-1},h)[h^{-1} \cdot \sigma^{-1}(r^{-1}, rh)] \sigma(h^{-1}, r^{-1}) \# h^{-1} r^{-1} \otimes_B (1_D \# rh) $$
$$ = (\sigma^{-1}(h^{-1}, h) \# h^{-1}) ( \sigma^{-1}(r^{-1}, rh) \# r^{-1}) \otimes_B (1_D \# rh)$$
$$ = (\sigma^{-1}(h^{-1}, h) \# h^{-1}) \otimes_B (\sigma^{-1}(r^{-1}, rh) \sigma(r^{-1}, rh) \# h)$$
which equals the right-hand side of Eq.~(\ref{eq: simp}).  

The natural module ${}_BA$ is free by a short argument using coset representatives $g_1,\ldots,g_q$
giving a left $R$-basis for $H$.  I.e., $|G: S| = q$, $\sum_{i=1}^q Rg_i = H$ and $\sum_{i=1}^q r_i g_i = 0$
implies each $r_i = 0$.  Then $\sum_{i=1}^q (D \#_{\sigma} R)(1_D \# g_i) = \sum_i D\sigma(R, g_i) \# Rh_i = D \# H$, since $\sigma(x,y)$ is invertible in $D$ for all $x,y \in G$. 
Suppose $$0= \sum_{i=1}^q (d_i \# r_i)(1 \# g_i) = \sum_i d_i \sigma({r_i}\1, g_i) \# {r_i}\2 g_i $$
where each $r_i$ is a linear combination of group elements $s_{ij}$ in $S$.   In this case each
$s_{ij} g_i \in Sg_i$ and it follows from the partition of $G$ into left cosets and the equation just above that each $d_i \# r_i = 0$.

Finally $Q = H / R^+H \cong k[S \setminus G]$ by noting that $R^+$ has $k$-basis $\{ 1 - s \, : \, s \in S \}$.  
\end{proof}

\begin{theorem}
Let $G$ be a group with subgroup $H$ of finite index, $A$ a left twisted $G$-module $k$-algebra, and  $A \#_{\sigma} G$ an (associative) crossed product.  Then h-depth satisfies
\begin{equation}
\label{eq: hdepthineq}
d_h(A \#_{\sigma} H, A \#_{\sigma} G) \leq d_h(kH, kG).
\end{equation}
\begin{proof}
Follows from the previous proposition and  inequality (\ref{eq: mainresult}). 
\end{proof}  
\end{theorem}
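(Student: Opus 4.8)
The plan is to derive the inequality purely from two earlier results: the preceding (unlabelled) proposition, which verifies the Galois hypothesis for crossed products of finite-index group algebra pairs, and the inequality~(\ref{eq: mainresult}), whose derivation rests on exactly that hypothesis. First I would fix the dictionary of notation, since the symbol $A$ is overloaded: in the present statement $A$ is the twisted $G$-module algebra, i.e.\ it plays the role of $D$ in Section~\ref{four}, while the finite-index subgroup $H \leq G$ plays the role of the Hopf subalgebra $R$. Accordingly I set $kH \subseteq kG$ with quotient module $Q = kG/(kH)^+kG \cong k[H\backslash G]$, the right coset permutation module, which is finite-dimensional of dimension $[G:H]$ even when $G$ itself is infinite. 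The crossed products are then $A \#_{\sigma} H = A \#_{\sigma} kH \subseteq A \#_{\sigma} G = A \#_{\sigma} kG$, the latter being a right $kG$-comodule algebra carrying its canonical $Q$-comodule coaction.

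Next I would invoke the preceding proposition directly. Because $[G:H] < \infty$, it supplies coset representatives giving a left $kH$-basis of $kG$, verifies the vanishing condition~(\ref{eq: cond}) for the inverse Galois map by means of the inverse $2$-cocycle identity~(\ref{eq: twococycleinverse}), and shows that ${}_{A\#_{\sigma}H}(A\#_{\sigma}G)$ is free, hence faithfully flat. The conclusion is that $A \#_{\sigma} G$ is a $Q$-Galois extension of $A \#_{\sigma} H$; equivalently the coring $(A\#_{\sigma}G) \otimes Q$ over $A \#_{\sigma} G$ is Galois with invariant subalgebra $A \#_{\sigma} H$. This is precisely the supposition under which~(\ref{eq: mainresult}) was established, so the chain of Lemma~\ref{lem-ineq} (giving $d_h(A\#_{\sigma}H, A\#_{\sigma}G) \leq 2 d(Q, \mathcal{M}_{kG}) + 1$) followed by Corollary~\ref{cor-cue} (giving $2 d(Q, \mathcal{M}_{kG}) + 1 = d_h(kH, kG)$) assembles into the desired bound $d_h(A\#_{\sigma}H, A\#_{\sigma}G) \leq d_h(kH, kG)$.

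The genuine work sits entirely in the preceding proposition, so from the vantage point of this theorem the remaining task is bookkeeping rather than a new obstacle; there are, however, two points I would be careful about. First, for the h-depth bound itself only the $A$-bimodule isomorphism $\beta$ is actually needed --- this is the weaker ``or just that'' hypothesis of Lemma~\ref{lem-ineq} --- whereas the freeness of ${}_{A\#_{\sigma}H}(A\#_{\sigma}G)$, hence faithful flatness, is what promotes the statement to a genuine $Q$-Galois extension and, via Lemma~\ref{lemma-ff}, identifies $A\#_{\sigma}H$ as exactly the coinvariant subalgebra of the grouplike rather than a possibly smaller one. Second, Corollary~\ref{cor-cue} must be applied with $G$ possibly infinite; this is legitimate because that corollary is proved for infinite-dimensional $H$ as well, the isomorphism~(\ref{eq: map}) and Proposition~\ref{prop-entwinedepth} persisting in that generality, and because $Q \cong k[H\backslash G]$ is finite-dimensional precisely on account of $[G:H]<\infty$, which is all the finiteness the module-depth computation requires.
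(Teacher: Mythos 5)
Your proposal is correct and takes essentially the same route as the paper's one-line proof: the preceding proposition establishes that $A \#_{\sigma} G$ is a $Q$-Galois extension of $A \#_{\sigma} H$ (with ${}_{A\#_{\sigma}H}(A\#_{\sigma}G)$ free, hence faithfully flat), and then inequality~(\ref{eq: mainresult}) --- that is, Lemma~\ref{lem-ineq} chained with Corollary~\ref{cor-cue} --- gives $d_h(A\#_{\sigma}H, A\#_{\sigma}G) \leq 2d(Q,\mathcal{M}_{kG})+1 = d_h(kH,kG)$. Your two cautionary points (that the weaker bimodule-isomorphism hypothesis of Lemma~\ref{lem-ineq} already suffices for the bound, and that Corollary~\ref{cor-cue} applies to the possibly infinite-dimensional $kG$ because $Q \cong k[H\backslash G]$ is finite-dimensional) are exactly the details the paper leaves implicit.
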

\subsection{General results for Hopf algebra $H$} We may extend Eq.~(\ref{eq: hdepthineq})  to a finite-dimensional Hopf algebra extension and crossed product algebra extension as follows.  
\begin{theorem}
Suppose $H$ is a finite-dimensional Hopf algebra, with $R$ a Hopf subalgebra, $A = D \#_{\sigma} H$, $B = D \#_{\sigma} R$ and $Q = H / R^+H$ as above.  Then 
\begin{equation}
d_h(B,A) \leq d_h(R,H).
\end{equation}
\end{theorem}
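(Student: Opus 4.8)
The plan is to reduce the theorem to the already-established inequality~(\ref{eq: mainresult}) by showing that its standing hypothesis holds automatically in this generality. Recall that~(\ref{eq: mainresult}) was obtained by combining Lemma~\ref{lem-ineq}---which needs only the $A$-bimodule isomorphism $A \otimes_B A \cong A \otimes Q$, with $A\otimes Q$ carrying the right-diagonal structure of Eq.~(\ref{eq: comodulealgdiagonal})---together with Corollary~\ref{cor-cue} and Eq.~(\ref{eq: cue}); the latter applies because a Hopf subalgebra is in particular a left coideal subalgebra, so $d_h(R,H)=2d(Q,\M_H)+1$. By Proposition~\ref{prop-sch} the required bimodule isomorphism is guaranteed as soon as condition~(\ref{eq: cond}) holds. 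Thus I would first record that the whole theorem comes down to verifying~(\ref{eq: cond}) for an arbitrary finite-dimensional Hopf subalgebra pair $R\subseteq H$ and arbitrary convolution-invertible cocycle $\sigma$; in particular no faithful flatness is needed for the $h$-depth bound itself.

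Next I would read~(\ref{eq: cond}) as the assertion that the candidate inverse Galois map descends to $Q$. Writing $\gamma(h)=1_D\#h$ for the cleaving map and $\bar\gamma=\mu$ for its convolution inverse $\mu(h)=\sigma^{-1}(S(h\2),h\3)\#S(h\1)$, the formula for $\beta^{-1}$ in Proposition~\ref{prop-sch} reads $\beta^{-1}(a\otimes\overline h)=a\,\bar\gamma(h\1)\otimes_B\gamma(h\2)$, and~(\ref{eq: cond}) asserts precisely that $\bar\gamma(r\1 g\1)\otimes_B\gamma(r\2 g\2)=\eps(r)\,\bar\gamma(g\1)\otimes_B\gamma(g\2)$ for all $r\in R$, $g\in H$. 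The decisive structural point is that $R$ is a Hopf subalgebra: since $S(R)=R$ and $\cop(R)\subseteq R\otimes R$, one has both $\gamma(R)\subseteq B$ and $\bar\gamma(R)=\mu(R)\subseteq D\#R=B$, so factors built out of $r$ may legitimately be transported across $\otimes_B$.

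The verification of~(\ref{eq: cond}) I would then carry out by factoring the $R$-dependence out of each tensorand and pushing it through the balanced tensor. The tools are the crossed-product identity $\gamma(x)\gamma(y)=\sigma(x\1,y\1)\gamma(x\2 y\2)$ and its inverse form $\gamma(xy)=\sigma^{-1}(x\1,y\1)\gamma(x\2)\gamma(y\2)$ (valid by the convolution relation $\sigma^{-1}(x\1,y\1)\sigma(x\2,y\2)=\eps(x)\eps(y)1_D$), the measuring relation $\gamma(g)d=(g\1\cdot d)\gamma(g\2)$, and the twisted-module and cocycle axioms~(\ref{eq: twistedmodule}) and~(\ref{eq: twococycle}). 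Using these to split $\gamma(r\2 g\2)$ and, dually, $\bar\gamma(r\1 g\1)$, the $R$-built factors land in $B$, cross $\otimes_B$, and are collapsed to the scalar $\eps(r)$ by the antipode/counit identity $S(r\1)r\2=\eps(r)1_H$. This is the same mechanism already executed explicitly for trivial $\sigma$ in the smash-product corollary and for group algebras in the preceding proposition, now run with a general invertible cocycle. Once~(\ref{eq: cond}) is in hand, Proposition~\ref{prop-sch} delivers the bimodule isomorphism and~(\ref{eq: mainresult}) finishes the argument; should one also want the genuine $Q$-Galois statement and $B=A^{\mathrm{co}\,Q}$, the Nichols--Zoeller theorem makes $H$ free over $R$, hence (using invertibility of $\sigma$ and a coset-representative $R$-basis, as in the group case) $A$ free and faithfully flat over $B$, and Lemma~\ref{lemma-ff} identifies the coinvariants.

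I expect the main obstacle to be precisely the bookkeeping in~(\ref{eq: cond}) for nontrivial $\sigma$: one must track the interleaved Sweedler components of $r$ and $g$ simultaneously through $\sigma$, $\sigma^{-1}$, the twisted action, and $S$, all the while ensuring that every factor pushed across $\otimes_B$ genuinely lies in $B=D\#_\sigma R$ rather than merely in $A$. The convolution identity for $\sigma$ and $\sigma^{-1}$ together with axioms~(\ref{eq: twistedmodule})--(\ref{eq: twococycle}) are exactly what force the collapse; the real difficulty is assembling them in the correct order so that the $R$-components separate cleanly before the counit is applied.
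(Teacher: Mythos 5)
Your reduction of the theorem to condition~(\ref{eq: cond}) plus inequality~(\ref{eq: mainresult}) is correct (including the observation that no faithful flatness is needed for the h-depth bound itself), but be aware that this is \emph{not} the paper's route: the paper never verifies (\ref{eq: cond}) for a general Hopf subalgebra pair and nontrivial $\sigma$. Instead it uses the formal inverse only to get surjectivity of $\beta$, proves injectivity by Schneider's integral-theoretic argument (a right integral $\Gamma'$ in $R$, the functional $\lambda(\overline{h})=\lambda_H(\Gamma'h)$ on $Q$, the projection $f(a)=a_{(0)}\lambda(\overline{a_{(1)}})$ and the projectivity equation $a=\sum_if(ar_i)\ell_i$), and then identifies $D\#_\sigma R$ with $A^{\mathrm{co}\,Q}$ by faithfully flat descent using Nichols--Zoeller freeness; that is precisely where finite-dimensionality enters. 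Your route, if executed, is more elementary: it produces the bimodule isomorphism $A\otimes_BA\cong A\otimes Q$ over $B=D\#_\sigma R$ directly, with no integrals, no coinvariants, and no flatness, and it does not even need $\dim H<\infty$ for this step. What the paper's heavier machinery buys is the stronger conclusion that $A\otimes Q$ is a Galois coring with $B$ equal to the coinvariants, which the h-depth inequality does not require.

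The one weakness is that you stopped short of the decisive computation, flagging the bookkeeping in (\ref{eq: cond}) as the expected ``main obstacle.'' In fact it closes in three lines once you write down the dual splitting of $\bar\gamma=\mu$ explicitly, which is the convolution inverse of the crossed-product rule $\gamma(x)\gamma(y)=\sigma(x_{(1)},y_{(1)})\gamma(x_{(2)}y_{(2)})$: inverting both sides in $\Hom(H\otimes H,A)$ (the inverses are $x\otimes y\mapsto\mu(y)\mu(x)$ and $x\otimes y\mapsto\mu(x_{(1)}y_{(1)})\sigma^{-1}(x_{(2)},y_{(2)})$) and convolving with $\sigma$ gives
$$ \mu(xy)=\mu(y_{(1)})\,\mu(x_{(1)})\,\sigma(x_{(2)},y_{(2)}), $$
where $\sigma(x,y)$ is identified with $\sigma(x,y)\#1_H\in D\#1_H\subseteq B$. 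Hence for $r\in R$, $h\in H$,
$$ \mu(r_{(1)}h_{(1)})\otimes_B\gamma(r_{(2)}h_{(2)})
=\mu(h_{(1)})\,\mu(r_{(1)})\,\sigma(r_{(2)},h_{(2)})\otimes_B\gamma(r_{(3)}h_{(3)}), $$
and since $\mu(r_{(1)})\in D\#_\sigma R=B$ (this is exactly where $S(R)\subseteq R$ and $\cop(R)\subseteq R\otimes R$ are used) and $\sigma(r_{(2)},h_{(2)})\#1_H\in B$, these factors pass across $\otimes_B$ termwise, giving
$$ \mu(h_{(1)})\otimes_B\mu(r_{(1)})\,\sigma(r_{(2)},h_{(2)})\,\gamma(r_{(3)}h_{(3)})
=\mu(h_{(1)})\otimes_B\mu(r_{(1)})\gamma(r_{(2)})\gamma(h_{(2)})
=\eps(r)\,\mu(h_{(1)})\otimes_B\gamma(h_{(2)}), $$
which vanishes for $r\in R^+$. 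This is (\ref{eq: cond}), and Proposition~\ref{prop-sch} together with (\ref{eq: mainresult}) then finishes the proof exactly as you planned. So your proposal is completable and yields a cleaner argument than the published one; it only needed this last identity to be made explicit rather than anticipated as a difficulty.
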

\begin{proof}
The proof may be made to follow from \cite[3.6]{Sch} and Eq.~(\ref{eq: mainresult}), but we provide some more details as a convenience to the reader.  
Note first  that $\beta: A \otimes_B A \rightarrow A \otimes Q$, $x \otimes y \mapsto xy\0 \otimes \overline{y\1}$ in the proof of Proposition~\ref{prop-sch} is shown to be surjective from the formal
inverse $\beta^{-1}$ defined there and the equation $\beta \circ \beta^{-1} = \id_{A \otimes Q}$.  

That $\beta$ is injective follows from \cite{Sch} in the following way using norms of augmented Frobenius algebras, freeness of $H$ over $R$ and the augmentation of the convolution algebra $Q^*$ induced from the grouplike element $\overline{1_H} \in Q$.  Let $B$ denote $A^{\mbox{co}\, Q}$
for this argument.  Let $\lambda_H, \lambda_R$ be nonzero left integrals on $H$ and $R$, respectively.  Let $\Gamma'  \in R$ satisfy $\lambda_R \Gamma' = \eps$, so that $\Gamma'$ is
a nonzero right integral in $R$.  Define $\lambda: Q \rightarrow k$ by $\lambda(\overline{h}) = \lambda_H(\Gamma' h)$, and note the left integral property, $\overline{h\1} \lambda(\overline{h\2})$ for all $h \in H$
\cite[p.\ 307, (i)]{Sch}.  By \cite[p.\ 307, (ii)]{Sch}, there is $\Lambda \in H$ such that $\eps_Q = 
\Lambda \lambda$,  which follows  from expressing a right norm $\Gamma$ of $\lambda_H$
as $\Gamma = h \Gamma'$, then applying a Nakayama automorphism $\alpha$ of $H$
to express $\Lambda = \alpha(h)$.  Next define as in \cite[p.\ 303, (1)]{Sch} $f: {}_BA \rightarrow {}_BB$ by $f(a) = a\0 \lambda(\overline{a\1})$ using the left integral property. 

In order to continue,  note that $\mbox{can}: A \otimes A \rightarrow A \otimes H$, given
by $x \otimes y \mapsto xy\0 \otimes y\1$ is surjective, since given $a \otimes h \in A \otimes H$,
$\mbox{can}(a(\sigma^{-1}(S(h\2), h\3) \# S(h\1)) \otimes 1_D \# h\4) = a \otimes h$
by the computation on \cite[p.\ 109]{M}.  It follows from the bijectivity of the antipode
that $\mbox{can'}: A \otimes A \rightarrow A \otimes H$ given by $x \otimes y \mapsto x\0 y \otimes x\1$ is also surjective; cf. \cite[p.\ 124]{M}.  Let $\mbox{can'}(\sum_i r_i \otimes \ell_i) = 1_A \otimes \Lambda$.  Then  \cite[p.\ 303, (2)]{Sch} shows that $a = \sum_i f(ar_i)\ell_i$ for each $a \in A$. 
Applying this projectivity equation to $\beta(\sum_k x_k \otimes_B y_k) = 0$, \cite[p.\ 303, (3)]{Sch} shows that
 $\sum_i x_k \otimes_B y_k = 0$. 

Since $H$ is a free $R$-module, a faithfully flat descent along the crossed product extension shows
that $A \#_{\sigma} R = A^{\mbox{co}\, Q}$. 
Since $\beta: A \otimes_B A \stackrel{\cong}{\longrightarrow} A \otimes Q$ as $A$-bimodules,
it follows from Eq.~(\ref{eq: mainresult}), and the proof preceding it, that the inequality in
the theorem holds.    
\end{proof}

\begin{cor}
\label{cor-core}
Suppose $N$ is a normal subgroup of a finite group $G$ contained in a subgroup $H \leq G$.  Then the h-depth satisfies the equality,  $ d_h(kH,kG) = d_h(k[H/N], k[G/N])$. 
\end{cor}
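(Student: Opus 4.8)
The plan is to reduce each h-depth to the module depth of its associated quotient coalgebra by means of Corollary~\ref{cor-cue}, and then to match the two quotient modules by restricting scalars along the canonical map $kG \to k[G/N]$.

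First I would note that $kH \subseteq kG$ and $k[H/N] \subseteq k[G/N]$ are inclusions of finite-dimensional Hopf algebras (in particular, of left coideal subalgebras), so Corollary~\ref{cor-cue} applies to both. Setting $Q_1 = kG/(kH)^+kG$ and $Q_2 = k[G/N]/(k[H/N])^+k[G/N]$, this gives
$$d_h(kH, kG) = 2d(Q_1, \M_{kG}) + 1, \qquad d_h(k[H/N], k[G/N]) = 2d(Q_2, \M_{k[G/N]}) + 1.$$
As a right $kG$-module $Q_1$ is the permutation module $k[H\backslash G]$ on the right cosets $Hg$, with action $\overline{g}\cdot g' = \overline{gg'}$; likewise $Q_2 \cong k[(H/N)\backslash(G/N)]$. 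It therefore suffices to prove $d(Q_1,\M_{kG}) = d(Q_2, \M_{k[G/N]})$.

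Next I would exploit the hypotheses $N \trianglelefteq G$ and $N \subseteq H$: for every $g \in G$ and $n \in N$ one has $gng^{-1} \in N \subseteq H$, so $Hgn = Hg$ and hence $N$ acts trivially on $Q_1$. Thus $Q_1 \cdot kN^+ = 0$, and the two-sided (indeed Hopf) ideal $I = kG\,kN^+ = kN^+ kG$ — a Hopf ideal as recorded in the proof of Corollary~\ref{cor-cores} — lies in $\Ann_{kG}(Q_1)$. Lemma~\ref{lemma-modI} then gives $d(Q_1, \M_{kG}) = d(Q_1, \M_{kG/I})$. Finally, under the identification $kG/I \cong k[G/N]$ the map $Hg \mapsto (H/N)(gN)$ is a well-defined bijection of right cosets (well-definedness and injectivity both using $N \subseteq H$) which intertwines the $k[G/N]$-actions, so $Q_1 \cong Q_2$ as $k[G/N]$-modules and $d(Q_1, \M_{k[G/N]}) = d(Q_2, \M_{k[G/N]})$.

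Stringing these equalities together produces $d(Q_1, \M_{kG}) = d(Q_2, \M_{k[G/N]})$, and the two displayed identities then yield $d_h(kH,kG) = d_h(k[H/N], k[G/N])$. The only genuinely substantive points are the triviality of the $N$-action on $Q_1$ and the fact that the coset bijection is a morphism of $k[G/N]$-modules; I expect the main (though routine) obstacle to be this module-isomorphism check, which is precisely where normality of $N$ and the containment $N \leq H$ enter. One could alternatively obtain the inequality $\leq$ from the crossed-product theorem together with Example~\ref{example-groupascrossedproduct} (writing $kG = kN\#_\sigma k[G/N]$ and $kH = kN\#_\sigma k[H/N]$) and the reverse inequality from the descent argument of Theorem~\ref{prop-sigma} adapted verbatim to $A$-$A$-bimodules.
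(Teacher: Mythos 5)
Your proof is correct and takes essentially the same route as the paper's own (first) proof: reduce both h-depths via Corollary~\ref{cor-cue}, observe that the Hopf ideal $I = kG\,kN^+$ annihilates $Q \cong k[H\backslash G]$ because $N \trianglelefteq G$ and $N \subseteq H$, and apply Lemma~\ref{lemma-modI} together with the identification $kG/I \cong k[G/N]$; you merely make explicit the coset-bijection and module-isomorphism checks the paper leaves implicit. Your closing alternative (Example~\ref{example-groupascrossedproduct} with inequality~(\ref{eq: mainresult}) for one direction, a Theorem~\ref{prop-sigma}-style descent for the other) is exactly the paper's stated second proof.
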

\begin{proof}
This follows from Corollary~\ref{cor-cue}, since in either case $Q \cong k[G \setminus H]$, while $N$ acts trivially
on this $G$-module.  Note that $I = kGkN^+$ is a Hopf ideal in $kG$ (generated by $\{ g - gn \| \, g \in G, n \in N \}$), which annihilates $Q$, and we apply Lemma~\ref{lemma-modI}.  

A second proof is to apply the observation in  Example~\ref{example-groupascrossedproduct} that one has $kG = kN \#_{\sigma} k[G/N]$ and similarly $kH = kN \#_{\sigma} k[H/N]$.  Apply now the inequality~(\ref{eq: mainresult}) to obtain $d_h(kH,kG) \leq d_h(k[H/N], k[G/N])$.   The inequality $d_h(k[H/N], k[G/N]) \leq d_h(kH,kG)$ has a proof very similar to the  proof of Theorem~\ref{prop-sigma} using the relatively nice Hopf ideal $I = kG kN^+$.  
\end{proof}

\subsection{Acknowledgements}  The authors  thank Dr.\ C. Young for stimulating conversations about depth-preserving algebra homomorphisms. Research for this paper was funded by the European Regional Development Fund through the programme {\tiny COMPETE} 
and by the Portuguese Government through the FCT  under the project 
\tiny{ PE-C/MAT/UI0144/2013.nts}.

\end{document}